\documentclass[a4paper,draft,oneside,12pt]{article}

\usepackage{ amsthm,amsmath,amsfonts,amssymb}
\usepackage{epsfig}
\usepackage{colordvi,helvet,multicol}
\usepackage{epic}
\usepackage{mathptmx}
\usepackage{times}
\usepackage[T1]{fontenc}
\usepackage{verbatim}
\usepackage{graphics}
\usepackage{makeidx}
\usepackage{fancyhdr}
\makeindex
\makeatletter
\long\def\@caption#1[#2]#3{
  \par
  \addcontentsline{\csname ext@#1\endcsname}{#1}%
    {\protect\numberline{\csname the#1\endcsname}{\ignorespaces #2}}%
  \begingroup
    \@parboxrestore
    \if@minipage
      \@setminipage
    \fi
   \normalsize
    \footnotesize
    \@makecaption{\csname fnum@#1\endcsname}{\ignorespaces #3}\par
  \endgroup}
\makeatother
 \theoremstyle{plain}
 \baselineskip=0pt
\cfoot{}
\parindent=0mm
\setlength{\headheight}{0mm} \setlength{\textheight}{220mm}
 \setlength{\footskip}{10mm}
\setlength{\leftmargin}{0cm} \setlength{\oddsidemargin}{0mm}
\setlength{\evensidemargin}{0mm} \setlength{\textwidth}{160mm}
\setlength{\marginparsep}{0mm} \setlength{\marginparwidth}{00mm}
\setlength{\parindent}{0em}

 \rhead[
\nouppercase \leftmark ]{  \thepage} \lhead[\thepage ]{
  \nouppercase\leftmark}

\newcommand{\ee}{\end{equation}}

\newtheorem{theorem}{Theorem}[section]

\newtheorem{remark}{Remark}[theorem]
\newtheorem{proposition}{Proposition}[section]
\newtheorem{corollary}{Corollary}[theorem]

\newcommand\RR{{\Bbb R}}
\newcommand\CC{{\Bbb C}}

\newcommand\NN{{\Bbb N}}


\begin{document}

\title{    $\alpha$-Amenable Hypergroups 
}
\author{Ahmadreza  Azimifard
 \date{}
 }
 \maketitle

\maketitle


\begin{abstract}
                 Let $K$ denote a locally compact commutative hypergroup,
                 $L^1(K)$ the hypergroup algebra, and $\alpha$ a real-valued
                 hermitian character of $K$. We show that $K$ is $\alpha$-amenable
                 if and only if $L^1(K)$ is $\alpha$-left amenable.
                 We also consider   the $\alpha$-amenability of hypergroup
                 joins and polynomial hypergroups in several variables as well as a single variable.

\vspace{1.cm}

\footnotesize{
\begin{tabular}{llrl}
{\bf{  Keywords.}}  &  \multicolumn{2}{l}
{ $\alpha$-Amenable hypergroups;   }\\
&    {    Koornwinder, associated Legendre,  Pollaczek, and disc  polynomials}\\
   \end{tabular}
 \vspace{.3cm}

  {\bf  AMS  Subject Classification (2000).}{ Primary 43A62,   43A07.} {Secondary   46H20.}

}
\end{abstract}


{\bf{Introduction}.}
                  Let $K$ denote  a locally compact commutative hypergroup,   $L^1(K)$
                  the hypergroup
                  algebra, and $\alpha$ a hermitian character of $K$.
                  It is shown in
                  \cite{f.l.s} that  $K$ is $\alpha$-amenable if
                  and only if either
                  $K$ satisfies the modified Reiter's condition of $P_1$-type
                  in
                  $\alpha$ or the maximal ideal in $L^1(K)$ generated
                  by $\alpha$ has a bounded approximate identity.
                  For instance, $K$ is always $(1-)$ amenable, and
                  if $K$ is compact
                  or $L^1(K)$ is amenable, then $K$ is $\alpha$-amenable in
                  every
                  character $\alpha$. It is worth noting, however, that there
                  do
                  exist  hypergroups which are not $\alpha(\not=1)$-amenable;
                  e.g. see
                  \cite{f.l.s, Ska92}. So,  the amenability of
                  a hypergroup in
                  a character $\alpha$ cannot in general
                  imply its amenability in
                  other characters  even if $\alpha$ is integrable,   as illustrated
                  in Section \ref{main.II}.
                  In fact,   this kind of amenability
                  of  hypergroups depends heavily
                  on the asymptotic behavior
                  of characters as well as Haar measures, as
                  demonstrated in this paper
                  and \cite{azimifard.Monath, azimi.C.Math.Rep.,  f.l.s}.

               The paper is devoted to the character amenability
               of hypergroups.
               Sections \ref{first.section} and  \ref{main.II} contain our  main results.
               First  we show that   if the character $\alpha$ is real-valued, then  $K$ is $\alpha$-amenable
               if and only if $L^1(K)$ is $\alpha$-left amenable; see  Theorem \ref{main.3}.
               We then (Theorem \ref{main.6}) consider   the $\alpha$-amenability of
               hypergroup joins.
               Section \ref{main.II} is  restricted to the
               polynomial hypergroups.  Theorem \ref{main.2}  provides
               a necessary condition for  the $\alpha$-amenability of
               hypergroups; and, subsequently we use this result to examine the $\alpha$-amenability
               of various polynomial
               hypergroups. In fact, we show
               that the majority of common examples of polynomial hypergroups are
               only $1$-amenable,  and
               Example (VI)  illustrates  just  how complicated hypergroups can be. 

               Parts of this paper are taken from author's
               dissertation at  Technische Universität München.

{\bf{Preliminaries.}} \label{preliminary}
                  Let $(K, p, \sim )$ denote a
                  locally compact commutative hypergroup with Jewett's axioms \cite{Jew75}, where
                  $p:K\times K\rightarrow M^1(K)$, $(x,y)\mapsto p(x,y)$,
                  and $\sim:K\rightarrow K$,  $x\mapsto \tilde{x}$,
                  specify  the convolution and involution on $K$ and $p{(x,y)}=p{(y,x)}$ for
                  every $x, y\in K$. Here
                  $M^1(K)$ denotes the set of all probability measures on  $K$.

                  Let us first recall    required notions here,
                  which are
                  mainly from \cite{BloHey94, Jew75}.
                  Let $C_c(K)$, $C_0(K)$, and $C^b(K)$ be the
            spaces of all  continuous functions, those which have
            compact support,
            vanishing at infinity, and bounded on $K$,  respectively. Both $C^b(K)$
            and $C_0(K)$ will be topologized by the uniform norm
            $\| \cdot  \|_\infty$, and by Riesz's theorem  $C_0(K)^\ast\cong M(K)$,
            the  space of all complex regular  Radon measures on $K$.
            The translation of $f\in C_c(K)$ at  the point
            $x\in K$, $T_xf$, is defined by $T_xf(y)=\int_K  f(t)dp{(x,y)}(t)$,
            for every $y\in K$.

              Let  $m$ denote the unique Haar measure of $K$ \cite{Spec75} and $(L^p(K), \|\cdot\|_p)$ $(p\geq 1)$
              the usual Banach space. If   $p=1$,   $(L^1(K), \|\cdot \|_1)$   is a Banach
              $\ast$-algebra where  the convolution and involution of
              $f,g\in L^1(K)$ are given by  $ f*g(x)=\int_K f(y)T_{\tilde{y}}g(x)dm(y)$ ($m$-a.e.)
              and
              $f^\ast(x)=\overline{f(\tilde x)}$
              respectively. If $K$ is discrete, then $L^1(K)$ has an identity element;
              otherwise $L^1(K)$ has a bounded approximate identity (b. a. i.), i.e.
              there exists a net $\{e_i\}_i$ of functions in $L^1(K)$ with $\|e_i\|_1\leq M$,   $M>0$,
              such that $\|f \ast e_i-f\|_1\rightarrow 0$ as
              $i\rightarrow \infty$.

              The dual of  $L^1(K)$ can be identified with
              the usual Banach space
              $L^\infty(K)$, and its structure space is  homeomorphic to the
              character
              space of $K$, i.e.
    \[ \mathfrak{X}^b(K):=\left\{
                                     \alpha\in C^b(K): \alpha(e)=1, \;p(x,y)
                                    (\alpha)=\alpha(x)\alpha(y), \; \forall\;x,y\in
                                    K\right\}\]
         equipped with the compact-open topology.
        $\mathcal{X}^b(K)$ is a locally compact Hausdorff space.
        Let  $\widehat{K} $ denote the set of all  hermitian
        characters $\alpha$ in $\mathcal{X}^b(K)$,
        i.e. $\alpha(\tilde{x})=\overline{\alpha(x)}$ for every $x\in
        K$, with   a  Plancherel measure $\pi$.
        In contrast  to the case of groups,      $\widehat{K}$
        might  not have    the dual hypergroup structure
        and might  properly contain $ \mathcal{S}=\mbox{supp }{\pi}$.

       The Fourier-Stieltjes transform of $\mu\in M(K)$, $\widehat{\mu}\in C^b(\widehat{K})$,
       is  given by  $\widehat{\mu}(\alpha):=\int_K \overline{\alpha(x)}d\mu(x)$. 
       Its restriction to   $L^1(K)$   is called the Fourier transform.
       We have  $\widehat{f}\in C_0(\widehat{K})$, for $f\in
       L^1(K)$, and $I(\alpha):=\{f\in L^1(K): \widehat{f}(\alpha)=0\}$ is
       the maximal ideal in $L^1(K)$ generated
       by $\alpha$ \cite{BonDun73}.

       $K$ is called  $\alpha$-amenable $(\alpha\in \widehat{K})$ if there exists
       $m_\alpha\in L^{\infty}(K)^{\ast}$ such that (i)
       $m_\alpha(\alpha)=1$ and (ii)
       $m_\alpha(T_xf)=\alpha(x)m_\alpha(f)$ for every
       $f\in L^\infty(K)$ and $x\in K$.  $K$ is called
         amenable if the latter holds for $\alpha=1$.

       For the sake of completeness,  we recall
       the modified Reiter's condition of $P_1$-type
       in $\alpha \in \widehat{K}$ from \cite{f.l.s} which is
       required    in Theorem \ref{main.5}. By this condition we
       shall mean  for every $ \varepsilon>0$ and
       every compact subset $C$ of $K$ there exists  $ g\in L^1(K)$ with  $\|g\|_1\leq M$
       $(M>0)$
       such that $\widehat{g}(\alpha)=1$ and $ \|T_{x}g-\alpha(x)g\|_1<\varepsilon$
       for all $x\in C.$  The condition is simply called Reiter's condition if
       $\alpha=1$ \cite{Ska92}.

         \section{$\alpha$-Left Amenability of $L^1(K)$}\label{first.section}
               Let $X$ be a   Banach $L^1(K)$-bimodule and $\alpha\in
               \widehat{K}$. Then, in a canonical way,
                the dual space $X^\ast$ is a Banach
                $L^1(K)$-bimodule. The module   $X$ is called a $\alpha$-left
               $L^1(K)$-module if the left module multiplication is given by
               $f\cdot x=\widehat{f}(\alpha)x$, for  every
                $f\in L^1(K)$ and $x\in X$. In this case,  $X^\ast$ turns out  to be a
                $\alpha$-right $L^1(K)$-bimodule as well,
                i.e.  $\varphi\cdot f=\widehat{f}(\alpha)\varphi$,  for
                every $f\in L^1(K)$ and $\varphi\in X^\ast$.

                A continuous  linear map $D:L^1(K)\rightarrow X^\ast$
                is called a derivation if $D(f\ast g)=D(f)\cdot g+f\cdot D(g)$,
                for every  $f,g\in L^1(K)$, and  an inner derivation
                if $D(f)=f\cdot \varphi-  \varphi\cdot f$,
                for some $\varphi\in X^\ast$.   The algebra $L^1(K)$ is called
                $\alpha$-left amenable if for every $\alpha$-left
               $L^1(K)$-module $X$,    every
               continuous derivation $D:L^1(K)\rightarrow X^\ast$
               is inner; and, if the latter holds for every
               Banach $L^1(K)$-bimodule $X$,
               then $L^1(K)$ is called amenable.

               As shown in \cite{f.l.s}, $K$ is $\alpha$-amenable
              if and only if either $I(\alpha)$ has a b.a.i. or  $K$ satisfies
              the modified Reiter's condition of $P_1$-type
              in $\alpha$.
              In the following theorem we   explore the connection between
               the $\alpha$-amenability of $K$ and  $\alpha$-left amenability of $L^1(K)$.

        \begin{theorem}\label{main.3}
                \emph{Let $K$ be a  hypergroup and
                $\alpha\in \widehat{K}$,   real-valued.  Then $K$ is $\alpha$-amenable if and only if
                $L^1(K)$ is $\alpha$-left amenable.}
        \end{theorem}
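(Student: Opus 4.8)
\emph{Proof idea.} The plan is to reduce the statement to two facts: (a) \cite{f.l.s}, that $K$ is $\alpha$-amenable iff the maximal ideal $I(\alpha)\subseteq L^1(K)$ has a bounded approximate identity (b.a.i.) or $K$ satisfies the modified Reiter condition of $P_1$-type in $\alpha$; and (b) the standard description of character amenability of Banach algebras: if a Banach algebra $A$ has a b.a.i.\ and $\phi$ is a nonzero character, then $A$ is $\phi$-amenable iff $\ker\phi$ has a b.a.i., iff there is a bounded net $(u_i)$ in $A$ with $\phi(u_i)=1$ and $\|fu_i-\phi(f)u_i\|\to0$ for all $f\in A$. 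Now $L^1(K)$ always has a b.a.i., and since $\alpha$ is real-valued the character of $L^1(K)$ determined by $\alpha$ is $\phi_\alpha(f)=\widehat f(\alpha)=\int_Kf\,\alpha\,dm$, with $\ker\phi_\alpha=I(\alpha)$; thus ``$\alpha$-left amenability of $L^1(K)$'' is precisely ``$\phi_\alpha$-amenability of $L^1(K)$''. (This is the sole use of real-valuedness: it makes $\phi_\alpha$ integration against $\alpha$ itself rather than against $\overline\alpha$, so that the left action $f\cdot x=\widehat f(\alpha)x$ matches the translation in the definition of $\alpha$-amenability; for general hermitian $\alpha$ one would have to replace $\alpha$ by $\overline\alpha$ on one side.)

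Granting (a) and (b), the equivalence follows. If $L^1(K)$ is $\alpha$-left amenable then $I(\alpha)=\ker\phi_\alpha$ has a b.a.i., so $K$ is $\alpha$-amenable by (a). Conversely, suppose $K$ is $\alpha$-amenable and use (a). If $I(\alpha)$ has a b.a.i., then $L^1(K)$ is $\phi_\alpha$-amenable by (b). If instead $K$ satisfies the modified Reiter $P_1$-condition in $\alpha$, fix $\varepsilon>0$ and a compact $C\subseteq K$ and choose $g\in L^1(K)$ with $\|g\|_1\le M$, $\widehat g(\alpha)=1$ and $\|T_zg-\alpha(z)g\|_1<\varepsilon$ for $z\in C$. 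For $f\in C_c(K)$ the convolution formula gives $f*g-\widehat f(\alpha)g=\int_Kf(y)\bigl(T_{\tilde y}g-\overline{\alpha(y)}\,g\bigr)\,dm(y)$, and since $\alpha$ is hermitian the substitution $z=\tilde y$ yields $\|T_{\tilde y}g-\overline{\alpha(y)}g\|_1=\|T_zg-\alpha(z)g\|_1$; so, taking $C\supseteq\widetilde{\operatorname{supp} f}$, one gets $\|f*g-\widehat f(\alpha)g\|_1\le\|f\|_1\varepsilon$. Letting $\varepsilon$ decrease and $C$ increase produces a bounded net $(g_i)$ in $L^1(K)$ with $\widehat{g_i}(\alpha)=1$ and $\|f*g_i-\widehat f(\alpha)g_i\|_1\to0$ for $f\in C_c(K)$, hence, by density of $C_c(K)$ in $L^1(K)$ and the uniform bound, for all $f\in L^1(K)$; by commutativity of $L^1(K)$ this is the net required in (b), so $L^1(K)$ is $\phi_\alpha$-amenable.

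The part that requires genuine care is the Banach-algebra fact (b). One implication is the short construction $u_i:=u-u*e_i$ from a b.a.i.\ $(e_i)$ of $\ker\phi$ and any $u$ with $\phi(u)=1$. For the other, starting from the bounded net $(u_i)$ furnished by $\phi$-amenability and a b.a.i.\ $(d_j)$ of $A$: given a finite $F\subseteq\ker\phi$ and $\varepsilon>0$, first pick $j$ with $\|d_j*b-b\|<\varepsilon/2$ for $b\in F$, then pick $i$ with $\|(d_j*b)*u_i\|<\varepsilon/2$ for $b\in F$ (possible since each $d_j*b$ lies in $\ker\phi$ and $\|c*u_i\|\to0$ for $c\in\ker\phi$); the element $w_{F,\varepsilon}:=d_j-u_i*d_j$ belongs to $\ker\phi$, is uniformly bounded, and satisfies $\|w_{F,\varepsilon}*b-b\|<\varepsilon$ for $b\in F$, so $(w_{F,\varepsilon})$ is a b.a.i.\ for $\ker\phi$. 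Apart from this and the elementary adjoint identity $\langle T_{\tilde y}f,\psi\rangle=\langle f,T_y\psi\rangle$ used in the Reiter estimate, everything is bookkeeping; the only conceptual point, which is the main obstacle, is the identification of the hypergroup-theoretic definition of $\alpha$-amenability (an invariant functional $m_\alpha$ on $L^\infty(K)$ with $m_\alpha(T_xf)=\alpha(x)m_\alpha(f)$ for every $x\in K$) with the existence of a bounded $\phi_\alpha$-approximate identity in $L^1(K)$ — a passage already established in \cite{f.l.s}, which is exactly what makes the reduction to (a) and (b) work.
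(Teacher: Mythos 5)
Your overall strategy --- reduce the theorem to (a) the result of \cite{f.l.s} that $K$ is $\alpha$-amenable iff $I(\alpha)$ has a b.a.i.\ (equivalently, the modified Reiter condition holds in $\alpha$), plus (b) a characterization of character amenability of Banach algebras --- is coherent, and the pieces you actually verify are correct: the construction $u_i=u-u\ast e_i$, the $w_{F,\varepsilon}=d_j-u_i\ast d_j$ argument, and the Reiter-to-net estimate all work. But there is a genuine gap: nothing in your write-up connects these approximate-identity conditions with the \emph{definition} of $\alpha$-left amenability used in the theorem, namely that every continuous derivation $D:L^1(K)\rightarrow X^\ast$ is inner for every $\alpha$-left $L^1(K)$-module $X$. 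Your fact (b) asserts a three-way equivalence ($\phi_\alpha$-amenability, b.a.i.\ in $\ker\phi_\alpha$, existence of a bounded net $(u_i)$ with $\phi_\alpha(u_i)=1$ and $\|f\ast u_i-\phi_\alpha(f)u_i\|_1\rightarrow 0$), but what you prove is only the equivalence of the last two conditions. The bridge to the derivation-theoretic notion is invoked in both directions of your main argument --- once when you take the net as ``furnished by $\phi$-amenability'', and once when you conclude $\phi_\alpha$-amenability from the b.a.i.\ of $I(\alpha)$ --- and is never established. That bridge is exactly the substance of the theorem, and it is not contained in \cite{f.l.s} either: that paper handles the hypergroup side ($\alpha$-amenability of $K$ versus b.a.i./Reiter), not the cohomological side. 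Unless you are allowed to quote wholesale a Kaniuth--Lau--Pym/Monfared-type theorem on $\phi$-amenability (contemporaneous with this paper, not cited in it, and whose proof is of the same order of difficulty as the theorem itself), your proof is incomplete precisely at its core; the parts you flag as ``requiring genuine care'' are in fact the routine bookkeeping.

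For comparison, the paper proves the missing bridge directly. In one direction it takes an $\alpha$-mean $m_\alpha$, and for an arbitrary derivation $D$ sets $\Phi_x(f)=D(f)(x)$, identifies $\Phi_x$ with $\eta_x\in L^\infty(K)$, and shows via Fubini and Goldstine's theorem that $\varphi(x):=m_\alpha(\eta_x)$ implements $D$ as an inner derivation (a Johnson-type argument). For the converse it follows \cite[p.239]{BonDun73}: decompose $L^\infty(K)=X\oplus\CC\alpha$, choose $\nu$ with $\nu(\alpha)=1$, lift the derivation $\delta(f)=f\cdot\nu-\nu\cdot f$ through the adjoint of the projection onto $X$, and use innerness of the lifted derivation to produce the $\alpha$-mean $m_\alpha=\nu-P^\ast\psi$. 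These two arguments (or equivalents, e.g.\ a proof that the net you construct yields a $\phi_\alpha$-mean and that such a mean forces every derivation into duals of $\alpha$-left modules to be inner) are what your proposal must supply in place of the unproved half of your fact (b).
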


        \begin{proof}
        Assume $K$ to be $\alpha$-amenable, choose $X$ to be an
        arbitrary
        $\alpha$-left $L^1(K)$-module, and suppose that $D:L^1(K)\rightarrow X^\ast$ is a continuous  derivation.
               For fixed   $x\in X$  define $\Phi_x\in L^1(K)^\ast$
        by $\Phi_x(f)=D(f)(x)$  for $f\in L^1(K)$. Then for every $f,g\in L^1(K)$
  \begin{align}\notag
 \Phi_x(f\ast g)&= D(f\ast g)(x)=(f\cdot D(g))(x)+(D(f)\cdot g)(x)\\ \notag
 &= D(g)(x\cdot f)+\widehat{g}(\alpha)D(f)(x)\\ \label{eq.1}
 &=\Phi_{x\cdot f}(g)+\widehat{g}(\alpha)\Phi_x(f).
 \end{align}

                    Moreover,  $\Phi_{x+y}=\Phi_x+\Phi_y$,
                    $\Phi_{\lambda\cdot x}=\lambda \Phi_x$,   and $\|\Phi_x\|\leq \|D_\alpha\|\|x\|$
                    for $x,y \in X$ and $\lambda\in \CC$.

                    We identify  $\Phi\in L^1(K)^\ast$ with 
                    $\eta\in L^\infty(K)$ by the relation
         \[\Phi(g):=\int_Kg(t)\eta(\tilde{t})dm(t)\]
         for $g\in L^1(K)$.
         Denote by $\eta_x$ and $\eta_{x\cdot f}$ the elements of $L^\infty(K)$
         corresponding to $\Phi_x$ and $\Phi_{x\cdot f}$, respectively.
         Thus, $\eta_{x+y}=\eta_x+\eta_y$,  $\eta_{\lambda x}=\lambda \eta_x$, and
         $\|\eta_x\|_\infty \leq \|D_\alpha\|\|x\|$ for $x, y\in X$ and $\lambda\in\CC$.

         Now (\ref{eq.1}) can be rewritten  as
            \begin{equation}\label{eq.2}
                      \int_K f\ast g(t)\eta_{x}(\tilde{t})dm(t)=
                      \int_K g(t) \eta_{x\cdot f}(\tilde{t} )dm(t)+
                      \widehat{g}(\alpha)\int_K f(t)\eta_x(\tilde{t})dm(t)
             \end{equation}

             Applying Fubini's theorem and \cite[Thm.1.3.21]{BloHey94} yield
           \begin{align}\notag
           \int_K f\ast g(t) \eta_x(\tilde{t})dm(t)&=
           \int_K \left(\int_K f(y)T_{\tilde{y}}g(t)dm(y)\right)\eta_x(\tilde{t})dm(t)\\ \notag
                                                   &=
           \int_K \left( \int_K g(t)T_{y}\eta_x(\tilde{t})dm(t)\right)f(y)dm(y)\\ \notag
                                                   &=
           \int_K\left(\int_K f(y)T_{y}\eta_x(\tilde{t})dm(y)\right)g(t)dm(t)\notag
          \end{align}
and
          \begin{align}\notag
          \int_K g(t)\eta_{x\cdot f}(\tilde{t})dm(t)=&
          \int_K \left(  \int_K f(y) T_{ {y}}\eta_x(\tilde{t})dm(y)\right)g(t)dm(t)\\
                                                    &
          -\int_K g(t) \left( \alpha(\tilde{t})\int_K
          f(y)\eta_x(\tilde{y})dm(y)\right)dm(t)\label{equ.44}.
          \end{align}

          Now, by assumption there exists
           $m_\alpha\in L^\infty(K)^\ast$ such that  $m_\alpha(\alpha)=1$
           and
           $m_\alpha(T_y\eta)=\alpha(y)m_\alpha(\eta)$ for every
           $\eta\in L^\infty(K)$ and  $y\in K$. Let
         \begin{equation}\notag
         \varphi(x):=m_\alpha(\eta_x), \hspace{1.cm}x\in X.
         \end{equation}
    Then $\varphi(x+y)=\varphi(x)+\varphi(y)$,
    $\varphi(\lambda x)=\lambda\varphi(x)$
    and $|\varphi(x)|\leq \|m_\alpha\|\|D_\alpha\|\|x\|$.
    Hence $\varphi\in X^\ast$, and for $f\in L^1(K)$
    and $x\in X$ it follows
    that
         \begin{equation}\notag
         f\cdot \varphi(x)=\varphi(x\cdot f)=m_\alpha(\eta_{x\cdot
         f}).
         \end{equation}

          By Goldstein's theorem \cite{Dunford},  the functional
          $m_\alpha$ is the $w^\ast$-limit of a net of functions  $g\in L^1(K)$,
          therefore
           from (\ref{equ.44}) we obtain that
         \begin{equation}\notag
         m_\alpha(\eta_{x\cdot f})=\int_K f(y)
         m_\alpha(T_{y}\eta_x)dm(y)-m_\alpha(\alpha)\Phi_x(f),
         \end{equation}
            and hence
          \begin{equation}\notag
               \Phi_x(f)=\widehat{f}(\alpha)m_\alpha(\eta_x)-m_\alpha(\eta_{x\cdot f})
               \hspace{.5cm} \mbox{for } f\in L^1(K), x\in X.
          \end{equation}
   That means $D(f)(x)=\varphi\cdot f(x)-f\cdot \varphi(x)$, thence  $D$ is an inner derivation,
    and this gives the $\alpha$-left amenability of $L^1(K)$.\\

        To prove the converse of  the theorem  we follow the method 
        in \cite[p.239]{BonDun73}. Assume  $L^1(K)$ to be $\alpha$-left
        amenable and consider $\alpha$-left $L^1(K)$-module $L^\infty(K)$
        with the module multiplications
        $f\cdot \varphi=\widehat{f}(\alpha)\varphi$
        and $\varphi\cdot f=f\ast\varphi$,
        for every  $f\in L^1(K)$ and
        $\varphi\in L^\infty(K)$.
        Since $\alpha\cdot f=f\ast \alpha=\widehat{f}(\alpha)\alpha$,
        $\CC\alpha$ is a closed $L^1(K)$-submodule of $L^\infty(K)$.
        Hence, $L^\infty(K)=X\oplus \CC\alpha$ where $X$ is also
        a closed $L^1(K)$-submodule of $L^\infty(K).$ Choose
        $\nu\in L^\infty(K)^\ast$ such that $\nu(\alpha)=1$, and define
        $\delta:L^1(K)\rightarrow L^\infty(K)^\ast,$ $\delta(f)=f\cdot \nu-\nu\cdot f$,
        for $f\in L^1(K).$
        Then
\begin{align}\notag
            \delta(f)(\alpha)&=f\cdot \nu(\alpha)-\nu\cdot f(\alpha)\\ \notag
                                &=\nu(\alpha\cdot f)-\nu(f\cdot \alpha)\\ \notag
                                &=\nu(f\ast \alpha)- \widehat{f}(\alpha)\nu(\alpha)\\\notag
                                &=\widehat{f}(\alpha)\nu(\alpha)-\widehat{f}(\alpha)\nu(\alpha)=0,
                                \hspace{1.cm}\mbox{ for every } f\in L^1(K).
\end{align}

        That means
        $\delta(f)\in \left(\CC\alpha\right)^{\bot}\subset L^\infty(K)^\ast$.
        Let  $P:L^\infty(K)\rightarrow X$ denote the projection onto
        $X$  and  $P^\ast:X^\ast\rightarrow L^\infty(K)^\ast$
         the adjoint operator. $P^\ast$ is an injective $L^1(K)$-bimodule homomorphism;   it
         follows that $(\CC\alpha)^\bot=(Ker P)^\bot=(P^\ast(X^\ast)_\bot)^\bot=P^\ast(X^\ast)$.
         Hence,  for each $f\in L^1(K)$ there exists
         $D(f)\in X^\ast$ such that $P^\ast D(f)=\delta(f).$
         Since $\delta$ is a continuous  derivation  on $L^1(K)$,
         the map $D:L^1(K)\rightarrow X^\ast$
         is a continuous  derivation as well. By assumption $D$ is
         inner, that is, there exists  $\psi\in X^\ast$ such that
         $D(f)=f\cdot \psi- \psi\cdot f$ for all
         $f\in L^1(K)$. Define $m_\alpha:=\nu-P^\ast \psi$.
         Then $m_\alpha(\alpha)=\nu(\alpha)-P^\ast \psi(\alpha)=1-\psi(P\alpha)=1$
and
\[
        f\cdot (P^\ast \psi)(\varphi)=P^\ast \psi(\varphi\cdot f)=
        \psi(P(\varphi\cdot f))=f\cdot \psi(P\varphi)=
        P^\ast (f\cdot\psi)(\varphi)\hspace{.3cm}(\varphi\in X).
\]
 Similarly
            $(P^\ast \psi)\cdot f(\varphi)= P^\ast (\psi\cdot f)(\varphi)$, thus
 \begin{align}\notag
            f\cdot P^\ast \psi-P^\ast \psi\cdot f&=P^\ast (f\cdot \psi-\psi\cdot f) \\ \notag
                                                    &=P^\ast Df=\delta(f)=f\cdot \nu-\nu\cdot f.\notag
 \end{align}
                Hence,  $f\cdot m_\alpha=f\cdot \nu-f\cdot P^\ast\nu=\nu\cdot f-P^\ast \psi\cdot f=m_\alpha\cdot f$.
 This  means
         \[
                    m_\alpha(\varphi\ast f)=m_\alpha(\varphi\cdot f)=f\cdot m_\alpha (\varphi)
                    = m_\alpha\cdot f(\varphi)=m_\alpha(f\cdot\varphi)=\widehat{f}(\alpha) m_\alpha (\varphi),
         \]
and hence 
    \[
            m_\alpha(T_x\varphi)=m_\alpha(\delta_{\tilde{x}}\ast \varphi)
                =\overline{\alpha(\tilde{x})}
                m_\alpha(\varphi)=\alpha(x)m_\alpha(\varphi),
    \]
 for every $\varphi\in L^\infty(K)$ and $ x\in K $, giving the $\alpha$-amenability of $K$.

 \end{proof}

                    The previous theorem combined with \cite{f.l.s}  yield     Johnson-Reiter's condition for
                    hypergroups,  in the $\alpha$-setting,  which reads  as follows.
\begin{theorem}\label{main.5}
    \emph{Let $K$ be a  hypergroup and $\alpha\in \widehat{K}$,
    real-valued. Then the following  statements are equivalent:}
    \begin{itemize}
            \item[\emph{(i)}]\emph{$K$ is $\alpha$-amenable.}
            \item [\emph{(ii)}]\emph{$L^1(K)$ is $\alpha$-left amenable.}
            \item  [\emph{(iii)}]\emph{ $I(\alpha)$ has a b.a.i.}
            \item [\emph{(iv)}]  \emph{$K$ satisfies the modified $P_1$-condition  in   $\alpha$.}
 \end{itemize}
\end{theorem}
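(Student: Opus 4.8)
The plan is to assemble the four-way equivalence from the two facts already in hand: Theorem~\ref{main.3}, which gives $(i)\Leftrightarrow(ii)$ and is the only place the hypothesis ``$\alpha$ real-valued'' is used, and the result of \cite{f.l.s} recalled above, namely that $K$ is $\alpha$-amenable \emph{iff} $(iii)$ holds or $(iv)$ holds. The latter instantly yields $(iii)\Rightarrow(i)$ and $(iv)\Rightarrow(i)$, so the whole statement reduces to upgrading the disjunction ``$(iii)$ or $(iv)$'' to either disjunct, i.e. to proving the Reiter-type equivalence $(iii)\Leftrightarrow(iv)$. Granting that, $(i)\Leftrightarrow(iii)\Leftrightarrow(iv)$ follows at once from \cite{f.l.s}, and Theorem~\ref{main.3} appends $(ii)$. (If \cite{f.l.s} is read as already containing the two separate equivalences, the theorem is immediate; the disjunctive phrasing is what makes the short bridge below worth spelling out.)

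For $(iv)\Rightarrow(iii)$ I would take a bounded net $(g_i)$ witnessing the modified $P_1$-condition in $\alpha$ (so $\widehat{g_i}(\alpha)=1$) together with a bounded approximate identity $(v_j)$ of $L^1(K)$, and form the bridging functions $e_{i,j}:=v_j-\widehat{v_j}(\alpha)\,g_i$. Applying $\widehat{\;\cdot\;}(\alpha)$ shows $e_{i,j}\in I(\alpha)$, and $(e_{i,j})$ is uniformly norm-bounded. The decisive point is that $\|f\ast g_i\|_1\to 0$ for every $f\in I(\alpha)$: after reducing to $f\in C_c(K)$ one writes $f\ast g_i-\widehat f(\alpha)g_i=\int_K f(y)\bigl(T_{\tilde y}g_i-\alpha(\tilde y)g_i\bigr)\,dm(y)$ and applies the $P_1$-estimate over the (compact) reflected support of $f$, using $\widehat f(\alpha)=0$. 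Then $\|f\ast e_{i,j}-f\|_1\le\|f\ast v_j-f\|_1+|\widehat{v_j}(\alpha)|\,\|f\ast g_i\|_1$ is made small by choosing $i$ first and $j$ afterwards, and re-indexing over the directed set of pairs $(F,\varepsilon)$ with $F\subseteq I(\alpha)$ finite and $\varepsilon>0$ yields a genuine bounded approximate identity for $I(\alpha)$.

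For $(iii)\Rightarrow(iv)$ I would argue dually: fix $h\in L^1(K)$ with $\widehat h(\alpha)=1$ (normalise $v_j$ for large $j$, since $\widehat{v_j}(\alpha)\to1$), take a bounded approximate identity $(e_i)$ of $I(\alpha)$, and set $g_i:=h-e_i\ast h$, which is bounded with $\widehat{g_i}(\alpha)=1$. Using $T_x(e_i\ast h)=e_i\ast T_xh$ and $\widehat{T_xf}(\alpha)=\alpha(x)\widehat f(\alpha)$ — the latter read off from the adjoint relation \cite[Thm.~1.3.21]{BloHey94} as in the proof of Theorem~\ref{main.3} — one computes $T_xg_i-\alpha(x)g_i=\rho_x-e_i\ast\rho_x$ with $\rho_x:=T_xh-\alpha(x)h\in I(\alpha)$. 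Since $x\mapsto T_xh$ is norm-continuous into $L^1(K)$, the family $\{\rho_x:x\in C\}$ is compact in $I(\alpha)$ for every compact $C\subseteq K$, and a bounded approximate identity acts uniformly on compacta, so $\sup_{x\in C}\|T_xg_i-\alpha(x)g_i\|_1\to0$, which is the modified $P_1$-condition.

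The only non-formal step is $(iii)\Leftrightarrow(iv)$, and the obstacle there is precisely the hypergroup feature that $\alpha\notin L^1(K)$ in general, so one cannot convolve directly with $\alpha$ as in Johnson's and Reiter's classical arguments; the functions $e_{i,j}$ and $g_i$ are engineered to stay inside $L^1(K)$ and $I(\alpha)$ while shuttling approximate-identity behaviour back and forth. Everything analytic this requires — existence of a bounded approximate identity for $L^1(K)$, strong continuity of translation on $L^1(K)$ and its uniformity on compacta, and the translation-invariance identity for $\widehat{\;\cdot\;}(\alpha)$ — is furnished by the Preliminaries and \cite{BloHey94}.
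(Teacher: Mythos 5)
Your proposal is correct and follows the paper's route exactly: the paper's entire proof of this theorem is the one-line remark that Theorem~\ref{main.3} combined with \cite{f.l.s} yields the four-way equivalence, the cited result being in fact the full chain (i)$\Leftrightarrow$(iii)$\Leftrightarrow$(iv) rather than a mere disjunction, so that appending (ii) via Theorem~\ref{main.3} finishes the argument. Your supplementary direct proof of (iii)$\Leftrightarrow$(iv) --- via $e_{i,j}=v_j-\widehat{v_j}(\alpha)g_i$ and $g_i=h-e_i\ast h$, with the bounded approximate identity acting uniformly on the norm-compact set $\{T_xh-\alpha(x)h:x\in C\}$ --- is a sound reconstruction of the argument already contained in \cite{f.l.s}; it closes the logical gap you were (reasonably, given the paper's ``either\dots or'' phrasing) cautious about, but it is not something the paper itself supplies or needs.
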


\hspace{1.cm}

\begin{corollary}
\emph{
                If $K$ is $\alpha$-amenable, then  every functional
                 $D:L^1(K)\rightarrow\CC$ such that $D(f\ast g)=\widehat{f}(\alpha)D(g)+\widehat{g}(\alpha)D(f)$,
                 $f, g\in L^1(K)$,  is zero (see \cite[5.2]{azimifard.Monath}).
                     The converse, however,  is in general not true;
                     see Example (II) or  \cite[5.5]{azimifard.Monath}.
                     The functional $D$ is called a $\alpha$-derivation.}
\end{corollary}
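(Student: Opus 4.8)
The plan is to recognise an $\alpha$-derivation $D\colon L^1(K)\to\CC$ as a derivation into the dual of a suitable $\alpha$-left module and then quote Theorem~\ref{main.5}. Equip $\CC$ with the symmetric bimodule structure $f\cdot\lambda=\lambda\cdot f=\widehat f(\alpha)\lambda$ for $f\in L^1(K)$, $\lambda\in\CC$; this is an $\alpha$-left $L^1(K)$-module in the sense of Section~\ref{first.section}, and it is self-dual, $\CC^\ast=\CC$, carrying the same two actions. Then the identity $D(f\ast g)=\widehat f(\alpha)D(g)+\widehat g(\alpha)D(f)$ says exactly that $D\colon L^1(K)\to\CC^\ast$ is a derivation. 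By Theorem~\ref{main.5}, $\alpha$-amenability of $K$ gives the $\alpha$-left amenability of $L^1(K)$, so $D$ is inner, $D(f)=f\cdot\psi-\psi\cdot f$ for some $\psi\in\CC$. But $f\cdot\psi=\widehat f(\alpha)\psi=\psi\cdot f$, hence $D\equiv 0$.

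The one point that needs care is continuity: Theorem~\ref{main.5} concerns \emph{continuous} derivations. I would dispense with any regularity hypothesis on $D$ by arguing directly from the fact that $I(\alpha)$ has a bounded approximate identity, which is part of Theorem~\ref{main.5} and is all that is genuinely used here. Since $I(\alpha)$ has a b.a.i., Cohen's factorisation theorem gives $I(\alpha)=I(\alpha)\ast I(\alpha)$; so for $h\in I(\alpha)$, writing $h=a\ast b$ with $a,b\in I(\alpha)$, one gets $D(h)=\widehat a(\alpha)D(b)+\widehat b(\alpha)D(a)=0$, i.e. $D$ vanishes on $I(\alpha)$. Now fix $u\in L^1(K)$ with $\widehat u(\alpha)=1$ (possible since $\widehat{(\cdot)}(\alpha)$ is a nonzero character of $L^1(K)$); then $f-\widehat f(\alpha)u\in I(\alpha)$ for every $f$, whence $D(f)=\widehat f(\alpha)D(u)$. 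Substituting this into the $\alpha$-derivation identity yields $\widehat f(\alpha)\widehat g(\alpha)D(u)=2\widehat f(\alpha)\widehat g(\alpha)D(u)$ for all $f,g$; taking $f=g=u$ forces $D(u)=0$, and therefore $D\equiv 0$. Note this route needs no continuity of $D$ and no assumption that $\alpha$ be real-valued.

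For the failure of the converse there is nothing to prove from scratch: one exhibits a hypergroup $K$ that is \emph{not} $\alpha$-amenable for some $\alpha\in\widehat K$ yet admits only the zero $\alpha$-derivation, which is Example~(II) below (see also \cite[5.5]{azimifard.Monath}). The main obstacle in the whole argument is not a single computation but rather isolating precisely which hypothesis (continuity of $D$, reality of $\alpha$) is really needed; the Cohen-factorisation argument sidesteps both, so I expect the proof to be short once that observation is made.
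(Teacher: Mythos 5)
Your argument is correct, and it is worth noting that the paper itself offers no proof of this corollary at all (it defers entirely to \cite[5.2]{azimifard.Monath}), so your write-up is genuinely more self-contained than the source. Your first route --- viewing $\CC$ with the symmetric action $f\cdot\lambda=\lambda\cdot f=\widehat f(\alpha)\lambda$ as an $\alpha$-left module whose dual is itself, so that an $\alpha$-derivation is exactly a continuous derivation into $X^\ast$ and hence inner and hence zero --- is the ``intended'' deduction from Theorem \ref{main.5}, and it correctly uses that inner derivations into a symmetric module vanish. Your second route, via the equivalence (i)$\Leftrightarrow$(iii) of Theorem \ref{main.5} and Cohen factorisation ($I(\alpha)=I(\alpha)\ast I(\alpha)$, so $D$ kills $I(\alpha)$; then $D(f)=\widehat f(\alpha)D(u)$ and the derivation identity forces $D(u)=2D(u)=0$), is a genuinely different and sharper argument: it needs neither continuity of $D$ nor the hypothesis that $\alpha$ be real-valued (the latter only enters through Theorem \ref{main.3}; the equivalence of $\alpha$-amenability with $I(\alpha)$ having a b.a.i.\ is already in \cite{f.l.s} for hermitian $\alpha$). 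The only point to make explicit is that ``functional'' must be read as \emph{linear} functional, since you use additivity in writing $D(f)=D(f-\widehat f(\alpha)u)+\widehat f(\alpha)D(u)$. Your treatment of the non-converse is also right: nothing is to be proved there beyond pointing to Example (II).
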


\begin{remark}\label{cor.1}
\begin{itemize}
\item[\emph{(i)}] \emph{ If $\alpha\in L^1(K)\cap L^2(K)$,   then
                   \[m_\alpha(f):=\frac{1}{\|\alpha\|_2^2}\int_K f(x)\overline{\alpha(x)}dm(x),\hspace{1.cm}f\in L^\infty(K),\]
                    is a  $\alpha$-mean on $L^\infty(K)$.
                     For example, if $K$ is a hypergroup of compact type
                     \cite{Frankcompact}, the functional $m_\alpha$ is an $\alpha$-mean
                     on $L^\infty(K)$ for every $\alpha\in \widehat{K}\setminus{\{1\}}$;
                     this holds also for $\alpha=1$ if $K$ is compact. We  note that 
                      the $\alpha$-means $m_\alpha$,  given as above, are unique
                     \cite{azimi.C.Math.Rep.}.}
                                                                      
                                \item[\emph{(ii)}]  \emph{
                                Observe that $\widehat{K}$ might  contain some positive  characters
                                $\alpha\not=1$ in which case $K$ is $\alpha$-amenable; see  Example (VI).}

\end{itemize}

\end{remark}

                  Our next topic is about   the $\alpha$-amenability of
                  hypergroup joins, and     Theorem \ref{main.6}
                  generalizes \cite[3.12 ]{Ska92} to the $\alpha$-setting. For the
                  sake of convenience, we first recall the definition of
                  hypergroup joins  and some known facts about  their dual spaces.
                  Let $(H, \ast )$ be a compact hypergroup with  a normalized Haar measure $m_H$, $(J, \cdot)$ a
                  discrete hypergroup  with a Haar measure $m_J$, and suppose that    $H\cap J=\{e\}$, where $e$
                  is the identity of both hypergroups.
                  The hypergroup joins $(H\vee J, \odot)$ is the set $H\cup J$
                  with  the unique topology for which $H$ and $J$
                  are closed subspace of,   where the   convolution $\odot$ is
                  defined as follows:
\begin{enumerate}
                 \item  $\varepsilon_x\odot \varepsilon_y$ agrees with that on $H$ if $x,y\in
                        H$,
                \item $\varepsilon_x\odot \varepsilon_y=\varepsilon_x\cdot \varepsilon_y$ if
                        $x,y\in J$, $x\not=\tilde{y}$,
                \item $\varepsilon_x\odot \varepsilon_y=\varepsilon_y=\varepsilon_y \odot \varepsilon_x$ if  $x\in H$, $y\in
                        J\setminus{\{e\}}$, and
                \item if $y\in J$ and  $y\not=e$,
        \begin{equation}\notag
            \varepsilon_{\tilde{y}}\odot \varepsilon_y= c_em_H+\sum_{w\in J\setminus{\{e\}}}c_w\varepsilon_w
        \end{equation}
                    where $\varepsilon_{\tilde{y}}\cdot  \varepsilon_y=\sum_{w\in J} c_w\varepsilon_w$,
                    $c_w\geq 0$,
                    only finitely many $c_w$ are nonzero,  and $\sum_{w\in J}c_w\varepsilon_w=1$.
\end{enumerate}
If  $m_J(\{e\})=1$,
                then $m_K:=m_H+1_{J\setminus{\{e\}}}m_J$\; is a Haar measure
                for
                $K$.
                Observe that $K//H\cong J$ and $H$ is a subhypergroup of $K=H\vee J$ but that $J$ is not unless
                either $H$ or $J$ is trivial \cite{vrem.joint}.  
                  As proved in  \cite[p. 119]{BloHey94},
                $\widehat{K}= \widehat{H}\cup \widehat{J} $,
                where $\widehat{H}\cap \widehat{J}=\{1\}$. The latter holds
                in the sense of hypergroup
                isomorphism, $\widehat{K}\cong \widehat{H}\vee\widehat{J}$,
                if $H$ and $J$ are strong hypergroups. In
                this case $K$ is   a
                strong hypergroup as well.

 \begin{theorem}\label{main.6}
            \emph{Let $K$ be as above, $|J|\geq 2$, and  $\alpha\in\widehat{J}$.
             Then  $J$ is $\alpha$-amenable if and only if  $K$ is
             $\alpha$-amenable. Moreover,
             if $H$ and $J$ are strong hypergroups,
             then $\widehat{H}$ is $\beta$-amenable if and only if
             $\widehat{K}$ is
             $\beta$-amenable ($\beta\in \widehat{\widehat{H}}$). }
\end{theorem}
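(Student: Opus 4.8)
The plan is to bypass derivations and ideals entirely and argue with the mean-theoretic definition of $\alpha$-amenability, transporting $\alpha$-means between $L^\infty(K)$ and $L^\infty(J)$ along two explicit norm-one linear maps. First recall that, inside $\widehat{K}=\widehat{H}\cup\widehat{J}$, the characters belonging to $\widehat{J}$ are exactly those that are constant equal to $1$ on $H$ — i.e.\ the characters of the quotient $K//H\cong J$ (see \cite[p.\,119]{BloHey94}); in particular $\alpha(x)=1$ for every $x\in H$, a fact used throughout. Now define $\iota:L^\infty(J)\to L^\infty(K)$ by $(\iota g)(x)=g(e)$ for $x\in H$ and $(\iota g)(y)=g(y)$ for $y\in J\setminus\{e\}$, and $E:L^\infty(K)\to L^\infty(J)$ by $(Ef)(y)=f(y)$ for $y\in J\setminus\{e\}$ and $(Ef)(e)=\int_H f\,dm_H$. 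Both are well defined because $m_K$ restricts to $m_J$ on $J\setminus\{e\}$ and to $m_H$ on $H$, and one checks at once that $\|\iota\|\le1$, $\|E\|\le1$, $E\iota=\mathrm{id}$, $\iota(\alpha|_J)=\alpha$ and $E(\alpha)=\alpha|_J$.

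For ``$K$ is $\alpha$-amenable $\Rightarrow$ $J$ is $\alpha$-amenable'', take an $\alpha$-mean $m_\alpha$ on $L^\infty(K)$ and put $m_\alpha^J:=m_\alpha\circ\iota$. Then $m_\alpha^J(\alpha|_J)=m_\alpha(\alpha)=1$, and the required equivariance $m_\alpha^J(T_z^J g)=\alpha(z)m_\alpha^J(g)$ for $z\in J$ follows once we know the intertwining relation $\iota(T_z^J g)=T_z^K(\iota g)$. This relation is checked pointwise from the defining rules of $\odot$: at a point of $J\setminus\{e\}$ it reduces to the convolution of $J$ — the only nonobvious instance being $z=\tilde y$, where the extra mass $c_e m_H$ in $\varepsilon_{\tilde y}\odot\varepsilon_y$ integrates $\iota g$ to exactly $c_e g(e)$, which is what the $J$-convolution registers at $e$ — and at a point of $H$ it is rule (3), namely $\varepsilon_z\odot\varepsilon_h=\varepsilon_z$ for $z\in J\setminus\{e\}$, $h\in H$.

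For the converse, take an $\alpha$-mean $m_\alpha^J$ on $L^\infty(J)$ and put $\widetilde m_\alpha:=m_\alpha^J\circ E$. Since $K=H\cup J$, to see that $\widetilde m_\alpha$ is an $\alpha$-mean it suffices to verify: (a) $E(T_x^K f)=Ef$ for $x\in H$, which together with $\alpha|_H\equiv1$ gives $\widetilde m_\alpha(T_x^K f)=\widetilde m_\alpha(f)=\alpha(x)\widetilde m_\alpha(f)$; and (b) $E(T_x^K f)=T_x^J(Ef)$ for $x\in J$, which gives $\widetilde m_\alpha(T_x^K f)=\alpha(x)m_\alpha^J(Ef)=\alpha(x)\widetilde m_\alpha(f)$; normalization is $\widetilde m_\alpha(\alpha)=m_\alpha^J(\alpha|_J)=1$. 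Statement (a) uses rule (3) on $J\setminus\{e\}$ and, at $e$, the invariance $\int_H T_x^H f\,dm_H=\int_H f\,dm_H$ of the normalized Haar measure of $H$; statement (b), at $y\in J\setminus\{e\}$, is the same $c_e m_H$ computation as before, while at $e$ it amounts to $\int_H T_x^K f\,dm_H=f(x)$ for $x\in J\setminus\{e\}$, which holds because $\varepsilon_x\odot\varepsilon_h=\varepsilon_x$ for all $h\in H$ and $m_H(H)=1$. I expect the only genuinely delicate point in both directions to be this bookkeeping at the common identity $e$ and with the term $c_e m_H$ in rule (4); all remaining verifications are routine.

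Finally, the ``moreover'' is a formal consequence: if $H$ and $J$ are strong hypergroups, then by the identification recalled above $\widehat{K}$ is again a hypergroup join — of the compact hypergroup $\widehat{J}$ (the dual of the discrete $J$, carrying its Plancherel measure as normalized Haar measure) with the discrete hypergroup $\widehat{H}$ (the dual of the compact $H$) — and $\widehat{H}\cap\widehat{J}=\{1\}$. Thus $\widehat{K}$ is of exactly the type treated in the first part, with compact component $\widehat{J}$ and discrete component $\widehat{H}$; applying that first part to the character $\beta\in\widehat{\widehat{H}}$ of the discrete component yields that $\widehat{H}$ is $\beta$-amenable if and only if $\widehat{K}$ is $\beta$-amenable. (When $\widehat{H}=\{1\}$ this is trivial, every hypergroup being $1$-amenable.)
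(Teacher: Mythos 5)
Your proposal is correct and takes essentially the same route as the paper: both directions transport $\alpha$-means along restriction/extension maps between $L^\infty(K)$ and $\ell^\infty(J)$ (your $\iota$ and $E$ are the explicit versions of the paper's $f\mapsto f|_{J^\ast}$ constructions), and the ``moreover'' part is reduced in the same way to $\widehat{K}\cong\widehat{J}\vee\widehat{H}$ with $\widehat{J}$ compact and $\widehat{H}$ discrete. The only difference is cosmetic: you verify the translation-intertwining relations directly from the defining rules of $\odot$, where the paper instead quotes the formula $T_xf=T_x(f|_{J^\ast})+T_x(1_H)\int_H f\,dm_H$ from Skantharajah.
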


\begin{proof}
                    Let $x\in J^\ast:=J\setminus{\{e\}}$.  By \cite[3.15]{Ska92} for
                    $f\in L^\infty(K)$, we have
        \begin{equation}\label{equ.4}
                T_xf=T_x(f|_{J^\ast})+T_x(1_H)\int_H f(t)dm_H(t).
        \end{equation}
                Now, take $\alpha\in \widehat{J}$ and   assume $J$ to be  $\alpha$-amenable.
                Then there exists $m_\alpha:\ell^\infty(J)\rightarrow \CC$ such that
                $m_\alpha(\alpha)=1$ and $m_\alpha(T_xf)=\alpha(x)m_\alpha(f)$,  for
                all $f\in \ell^\infty(J)$ and $x\in J$. The character $\alpha$ can
                be extended to $K$ by letting    $\gamma(x):=1$ for all   $x\in H$.
                Define
            \[M_\gamma:L^\infty(K)\rightarrow \CC, \hspace{.5cm}
                M_\gamma(f):=m_\alpha(f|_{J^\ast}),\hspace{.5cm}f\in L^\infty(K).\]
            We have
            $M_\gamma(\gamma)=m_\alpha(\gamma|_{J^\ast})=m_\alpha(\alpha)=1$, and 
             (\ref{equ.4}) implies that
    \begin{align}\notag
        M_\gamma(T_xf)&=M_\gamma(T_x(f|_{J^\ast}))+M_\gamma
        (T_x(1_H))\int_Hf(t)dm_H(t)\\ \label{equ.6}
        &=m_\alpha(T_x(f|_{J^\ast}))=\alpha(x)m_\alpha(f|_{J^\ast})=\gamma(x)M_\gamma(f),\hspace{.4cm}\mbox{
        for all } f\in L^\infty(K),  x\in J^\ast.
    \end{align}
            Since $\gamma|_H=1$ and  $(T_xf)|_{J^\ast}=f|_{J^\ast}$ for
             $x\in H$, the equality (\ref{equ.6})
             is valid for all $x\in K$. Therefore, $K$ is $\gamma$-amenable.

         To prove the converse, let  $\gamma\in \widehat{K}$
         and   assume $K$ to  be $\gamma$-amenable.
         If $\gamma|_H=1$, then by $\widehat{K}= \widehat{H}\cup \widehat{J} $
         we have $\gamma\in \widehat{J}$.
         Define
            \[m_\gamma:\ell^\infty(J)\rightarrow \CC, \hspace{.5cm} m_\gamma(f):=M_\gamma(f),  \hspace{.5cm}f\in L^\infty(K),\]
        where $M_\gamma$ is a $\gamma$-mean  on $L^\infty(K)$. Obviously
        $m_\gamma(\gamma)=1$,  and
            \[m_\gamma(T_xf)=M_\gamma(T_xf)=M_\gamma(T_x(f|_{J^\ast}))=\gamma(x)M_\gamma(f)=\gamma(x)m_\gamma(f), \]
        for all $f\in \ell^\infty(J)$ and $x\in J^\ast.$
        If $\gamma(x)\not=1$ for some $x\in H$, then  $\gamma$
        is a nontrivial  character of $H$ and
        $\widehat{K}= \widehat{H}\cup \widehat{J} $
         implies that $\gamma|_J=1$. Since
         $J$ is  commutative, $J$ must be 
         amenable \cite{Ska92},  and
         the amenability of  $H$ in
         $\gamma$ follows from Remark \ref{cor.1}.

        The proof of the second part can be obtained  from the
        first part and the fact that $\widehat{K}\cong \widehat{H}\vee \widehat{J}$ if $H$
        and $J$ are strong hypergroups.

\end{proof}

\section{$\alpha$-Amenability of polynomial hypergroups}\label{main.II}
             In this section we restrict our discussion to the polynomial
             hypergroups. First  we consider polynomial
             hypergroups in several variables which have been
             already studied by several authors
 (e.g. see  \cite{Koornwinder.1,zeuner.several}).
            The translation operators of these hypergroups seem to be   complicated, and  the study of
            their character amenability  via the modified Reiter's condition, in
            contrast  to the one variable case \cite{f.l.s},
            may
            require  sophisticated calculations.
             In Theorem \ref{main.2}, however, we provide a  necessary condition
            to the $\alpha$-amenability of these hypergroups.  Hence we point out that the  majority of
            common examples of polynomial hypergroups do not satisfy this condition. \\

            Let $\{P_{\bf{n}}\}_{{\bf{n}}\in \mathcal{K}}$ be a set of
            orthogonal polynomials on $\CC^d$ with respect to a measure $\pi\in
            M^1(\CC^d)$ such that $P_{\bf{n}}(u)=1$ for some $u\in \CC^d$, where
            $\mathcal{K}:=\NN_0^m $ with
             the discrete topology, $m,d\in \NN$, and  $\NN_0:=\NN\cup \{0\}$.
             Assume  $\mathcal{P}_n$ denotes
            the set of all polynomials $ P' \in \CC[z_1,z_2,...z_d]$ with degree
            less or equal than $n$ and $\mathcal{K}_n:=\{{\bf{n}}\in
            \mathcal{K}: P_{\bf{n}}\in \mathcal{P}_n\}$. Suppose that for every
            $n\in \NN$ the set $\{P_{\bf{n}}: {\bf{n}}\in \mathcal{K}_n\}$ is a
            basis of $\mathcal{P}_n$,  and for every ${\bf{n}},{\bf{m}}\in \mathcal{K}$
            the product $P_{\bf{n}}\cdot P_{\bf{m}}$ admits the unique non-negative
            linearization formula, i.e.
\begin{equation}\label{recursion.1}
        P_{\bf{n}}\cdot P_{\bf{m}}:=\sum_{{\bf{t}}\in \mathcal{K}}
        g({\bf{n}},{\bf{m}},{\bf{t}}) P_{\bf{t}}
\end{equation}
        where $g({\bf{n}},{\bf{m}},{\bf{t}})\geq 0$.
        Assume further that there exists
        a homeomorphism  ${\bf{n}}\rightarrow \tilde{\bf{n}}$ on $\mathcal{K}$ such that
         $P_{\tilde{\bf{n}}}=\overline{P_{\bf{n}}}$ for every  ${\bf{n}}\in \mathcal{K}$.
         In this case $\mathcal{K}$   with  the convolution of two point measures defined by
         $\varepsilon_{\bf{n}}\ast \varepsilon_{\bf{m}}(\varepsilon_{\bf{t}}):=
         p({\bf{n}},{\bf{m}})(\varepsilon_{\bf{t}}):=
         g ( {\bf{n}},{\bf{m}}, {\bf{t}})$ is  a
         hypergroup which  is called a polynomial hypergroup in $d$
         variables. The  hypergroup $\mathcal{K}$
         is obviously commutative and the identity element $e$ is the constant
         polynomial $P_0 \equiv 1.$ The character space $\widehat{\mathcal{K}}$ can
         be identified with  the set
         $ \{  {\bf{x}}\in \CC^d: | \alpha_{\bf{x}}({\bf{n}})|\leq
         1, \alpha_{\bf{x}}(\tilde{\bf{n}}) =\overline{\alpha_{\bf{x}}({\bf{n}})} \hspace{.2cm} \forall {\bf{n}}\in \mathcal{K} \}$,
         where
            $\alpha_{\bf{x}}({\bf{n}}):=P_{{\bf{n}}}({\bf{x}})$
            for ${\bf{x}}\in \CC^d$ and ${\bf{n}}\in \mathcal{K}$.
            For more
            on  polynomial
            hypergroups in several variables we refer
             the reader  to e.g. \cite{BloHey94,Koornwinder.1, zeuner.several}.

\begin{theorem}\label{main.2}
             \emph{ Let $\{P_{\bf{n}}({\bf{x}})\}_{{\bf{n}}\in \mathcal{K}}$
             define a    polynomial hypergroup in $d$ variables  on
             $\mathcal{K}:=\NN_0^m$ and $\alpha_{\bf{x}}\in\widehat{\mathcal{K}}$
             with $\pi(\{\alpha_{\bf{x}}\})=0$. If
             $\alpha_{\bf{x}}\in C_0(\mathcal{K})$,
             then $\mathcal{K}$ is not
             $\alpha_{\bf{x}}$-amenable. }
\end{theorem}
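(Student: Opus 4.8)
Suppose, towards a contradiction, that $\mathcal{K}$ is $\alpha_{\bf{x}}$-amenable, and fix an $\alpha_{\bf{x}}$-mean $m_\alpha\in\ell^\infty(\mathcal{K})^\ast$, so that $m_\alpha(\alpha_{\bf{x}})=1$ and $m_\alpha(T_{\bf{n}}f)=\alpha_{\bf{x}}({\bf{n}})m_\alpha(f)$ for all $f\in\ell^\infty(\mathcal{K})$ and ${\bf{n}}\in\mathcal{K}$. The plan is to show that the hypothesis $\alpha_{\bf{x}}\in C_0(\mathcal{K})$ forces $\alpha_{\bf{x}}\in L^2(\mathcal{K})$, which contradicts $\pi(\{\alpha_{\bf{x}}\})=0$ by the Plancherel theorem for polynomial hypergroups. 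The preliminary observation I need is that translations of a polynomial hypergroup preserve $C_0(\mathcal{K})$: comparing total degrees in \reff{recursion.1} and using the orthogonality of $\{P_{\bf{t}}\}$ with respect to $\pi$, one sees that $g({\bf{m}},{\bf{n}},{\bf{t}})=0$ whenever $\deg P_{\bf{t}}<\deg P_{\bf{n}}-\deg P_{\bf{m}}$; since each $\mathcal{K}_n$ is finite, for fixed ${\bf{m}}$ the support of $p({\bf{m}},{\bf{n}})$ eventually avoids any prescribed finite subset of $\mathcal{K}$ as ${\bf{n}}\to\infty$, and hence $T_{\bf{m}}f\in C_0(\mathcal{K})$ whenever $f\in C_0(\mathcal{K})$.

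Next I restrict $m_\alpha$ to $C_0(\mathcal{K})$. Since $\mathcal{K}$ is discrete, $C_0(\mathcal{K})^\ast=M(\mathcal{K})$ consists of the absolutely summable measures $\phi=\sum_{\bf{n}}\phi_{\bf{n}}\varepsilon_{\bf{n}}$, and because $\alpha_{\bf{x}}\in C_0(\mathcal{K})$ the functional $\phi:=m_\alpha|_{C_0(\mathcal{K})}$ is a nonzero element of $M(\mathcal{K})$ satisfying $\sum_{\bf{n}}\phi_{\bf{n}}\alpha_{\bf{x}}({\bf{n}})=m_\alpha(\alpha_{\bf{x}})=1$. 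Applying the invariance identity to $f\in C_0(\mathcal{K})$ (legitimate, by the previous paragraph, since then $T_{\bf{n}}f\in C_0(\mathcal{K})$) and interchanging the two summations (allowed because each $p({\bf{n}},{\bf{m}})$ is a probability measure), one finds $\varepsilon_{\bf{n}}\ast\phi=\alpha_{\bf{x}}({\bf{n}})\,\phi$ in $M(\mathcal{K})$ for every ${\bf{n}}\in\mathcal{K}$. Evaluating this at the identity $e$ and using that $\varepsilon_{\bf{n}}\ast\varepsilon_{\bf{m}}(\{e\})$ is nonzero only for ${\bf{m}}=\tilde{\bf{n}}$, where it equals $m(\{{\bf{n}}\})^{-1}$, gives $\phi_{\tilde{\bf{n}}}=m(\{{\bf{n}}\})\,\alpha_{\bf{x}}({\bf{n}})\,\phi_e$; relabelling ${\bf{n}}\mapsto\tilde{\bf{n}}$ and using $m(\{\tilde{\bf{n}}\})=m(\{{\bf{n}}\})$ together with $\alpha_{\bf{x}}(\tilde{\bf{n}})=\overline{\alpha_{\bf{x}}({\bf{n}})}$ yields $\phi_{\bf{n}}=\phi_e\,m(\{{\bf{n}}\})\,\overline{\alpha_{\bf{x}}({\bf{n}})}$ for all ${\bf{n}}$.

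Feeding this back into $\sum_{\bf{n}}\phi_{\bf{n}}\alpha_{\bf{x}}({\bf{n}})=1$ gives $\phi_e\sum_{\bf{n}}m(\{{\bf{n}}\})|\alpha_{\bf{x}}({\bf{n}})|^2=1$; since the left-hand series converges (the measure $\phi$ is summable and $\alpha_{\bf{x}}$ is bounded), we conclude $\phi_e\neq0$ and $\alpha_{\bf{x}}\in L^2(\mathcal{K})$. But a character of a polynomial hypergroup belongs to $L^2(\mathcal{K})$ exactly when it is an atom of the Plancherel measure; that is, $\alpha_{\bf{x}}\in L^2(\mathcal{K})$ implies $\pi(\{\alpha_{\bf{x}}\})>0$ (this is standard; see \cite{BloHey94} and \cite{azimi.C.Math.Rep.}). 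This contradicts $\pi(\{\alpha_{\bf{x}}\})=0$, so there is no $\alpha_{\bf{x}}$-mean and $\mathcal{K}$ is not $\alpha_{\bf{x}}$-amenable. The only genuinely external ingredient is the Plancherel dichotomy quoted at the end, which I would cite rather than reprove; the step that needs the most care in a full write-up is \emph{the $C_0$-invariance of the translations} and the attendant Fubini interchange, while everything else is bookkeeping with the Haar weights $m(\{{\bf{n}}\})$ and the involution ${\bf{n}}\mapsto\tilde{\bf{n}}$.
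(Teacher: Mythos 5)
Your proposal is correct, and it reaches the contradiction by a genuinely different organization of the same underlying computation. The paper works directly with the values $m_{\alpha_{\bf{x}}}(\varepsilon_{\tilde{\bf{n}}})=h({\bf{n}})\alpha_{\bf{x}}({\bf{n}})m_{\alpha_{\bf{x}}}(\varepsilon_{\bf{0}})$ (obtained from $T_{\bf{n}}\varepsilon_{\bf{0}}=h({\bf{n}})^{-1}\varepsilon_{\tilde{\bf{n}}}$, which is your evaluation of $\varepsilon_{\bf{n}}\ast\phi=\alpha_{\bf{x}}({\bf{n}})\phi$ at $e$) and then splits into two cases: if $m_{\alpha_{\bf{x}}}(\varepsilon_{\bf{0}})\neq 0$, testing the mean against finite signed sums $\sum_{{\bf{n}}\in\mathcal{M}}\xi_{\bf{n}}\varepsilon_{\tilde{\bf{n}}}$ and using boundedness forces $\alpha_{\bf{x}}\in\ell^1\cap\ell^2$, whence $\pi(\{\alpha_{\bf{x}}\})>0$; if $m_{\alpha_{\bf{x}}}(\varepsilon_{\bf{0}})=0$, the decomposition $\alpha_{\bf{x}}=\sum_{0\le t_i\le m_i}P_{\bf{t}}({\bf{x}})\varepsilon_{\bf{t}}+\alpha_{\bf{x}}^{\bf{m}}$ together with $\|\alpha_{\bf{x}}^{\bf{m}}\|_\infty\to 0$ gives $m_{\alpha_{\bf{x}}}(\alpha_{\bf{x}})=0$. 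Your route packages both cases into the single identity $1=m_{\alpha_{\bf{x}}}(\alpha_{\bf{x}})=\phi_e\sum_{\bf{n}}h({\bf{n}})|\alpha_{\bf{x}}({\bf{n}})|^2$, obtained by representing $m_{\alpha_{\bf{x}}}|_{C_0(\mathcal{K})}$ as a summable measure: if $\phi_e=0$ this reads $1=0$, and if $\phi_e\neq 0$ it puts $\alpha_{\bf{x}}$ in $\ell^2$. This is tidier and uses the hypothesis $\alpha_{\bf{x}}\in C_0(\mathcal{K})$ only once (to write $m_{\alpha_{\bf{x}}}(\alpha_{\bf{x}})$ through $\phi$), but it costs you two extra ingredients the paper does not need: the lemma that translations preserve $C_0(\mathcal{K})$ (your degree argument for this is fine; note that for the derivation of $\varepsilon_{\bf{n}}\ast\phi=\alpha_{\bf{x}}({\bf{n}})\phi$ it suffices to test against finitely supported $f$, for which $T_{\bf{n}}f$ is again finitely supported, so the full $C_0$-invariance is not strictly required there), and the sharper Plancherel dichotomy that $\ell^2$ membership alone of a character forces a positive atom, rather than the $\ell^1\cap\ell^2$ version the paper cites from \cite{BloHey94}; both are standard for countable discrete commutative hypergroups, but you should cite or prove the $\ell^2$ version explicitly in a final write-up.
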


\begin{proof}

                     Assume to the contrary  that $\mathcal{K}$ is $\alpha_{\bf{x}}$-amenable
                     and $m_{\alpha_{\bf{x}}}$  is a $\alpha_{\bf{x}}$-mean on
                     $\ell^\infty(\mathcal{K})$. Due  to
\begin{equation}\notag
               T_{\bf{n}}\varepsilon_{\bf{0}}({\bf{m}})=
               \sum_{\bf{t} \in\mathcal{K}} \varepsilon_{\bf{0}}({\bf{t}})p({\bf{n}},
               {\bf{m}})({\bf{t}})= p({\bf{n}},
               {\bf{m}})({\bf{0}})\varepsilon_{\tilde{{\bf{n}}}}({\bf{m}})
               =\frac{1}{h( \bf{n} )}\varepsilon_{\tilde{{\bf{n}}}}(\bf{m}),
\end{equation}
                    we have   $T_{\bf{n}}\varepsilon_{\bf{0}}=\frac{1}{h({\bf{n}} )}\varepsilon_{ \tilde{{\bf{n}}}}$
                    for
                    every ${\bf{n}}\in \mathcal{K}$. Therefore,
      \begin{equation}\label{eq.5}
                 m_{\alpha_{\bf{x}}}\left(\varepsilon_{ \tilde{{\bf{n}}}}\right)=
                 h( {\bf{n}} ) m_{\alpha_{\bf{x}}}(T_{\bf{n}} \varepsilon_{\bf{0}} )=
                 h({\bf{n}})\alpha_{\bf{x}}({\bf{n}})
                 m_{\alpha_{\bf{x}}}(\varepsilon_{\bf{0}}).
     \end{equation}

            Let $M>0$ be a bound for $ m_{\alpha_{\bf{x}}}$
            and 
            $\xi_{\bf{n}}=\frac{\overline{P_{\bf{n}}({\bf{x}})}}{|P_{\bf{n}}({\bf{x}})|}$ for $P_{\bf{n}}({\bf{x}})\not=0$. Then by
            the linearity of $m_{\alpha_{\bf{x}}}$ and
             (\ref{eq.5})   we have
        \begin{align}\notag
      M \geq | m_{\alpha_{\bf{x}}}(\sum_{\bf{n} \in \mathcal{M}} \xi_{\bf{n}}
      \varepsilon_{\tilde{\bf{n}}} )  | &=
      | \sum_{\bf{n}\in \mathcal{M}}\xi_{\bf{n}}
      m_{\alpha_{\bf{x}}}(\varepsilon_{\tilde{\bf{n}}} ) |=
      | \sum_{\bf{n}\in \mathcal{M}} |P_{\bf{n}}({\bf{x}})|
      h({\bf{n}})
      m_{\alpha_{\bf{x}}}(\varepsilon_{\bf{0}})|
      \\ \notag
      &
      \geq\sum_{\bf{n}\in \mathcal{M}}
      |P_{\bf{n}}({\bf{x}})|^2
      h({\bf{n}})
      |m_{\alpha_{\bf{x}}}
      (\varepsilon_{\bf{0}})|,
    \end{align}
    where   $\mathcal{M}$ is an arbitrary finite subset of $\mathcal{K}$. 
    If $ m_{\alpha_{\bf{x}}}(\varepsilon_{\bf{0}})\not=0$, then the  provious inequalities show that 
    $\alpha_{\bf{x}}\in \ell^1(\mathcal{K})\cap \ell^2(\mathcal{K})$, hence $\pi(\alpha_{\bf{x}})>0$ (see \cite[Proposition 2.5.1]{BloHey94})
     which is a constradiction. 
                     If we now  define  $\{\alpha_{\bf{x}}^{\bf{m}}\}_{{\bf{m}}\in\mathcal{K}}$ by
\begin{equation}
      \alpha_{\bf{x}}^{\bf{m}}(\bf{n}):=\left\{
      \begin{array}{l
      @{\quad{\mbox{ }}\quad}
               l}
     0 & n_i<m_i\hspace{.2cm}(1\leq  i \leq d), \\
      \alpha_{\bf{x}}({\bf{n}})&   \mbox {other,} \\
   \end{array}
  \right.
    \end{equation}
              then
              $\alpha_{\bf{x}}({\bf{n}})=(P_{\bf{n}}({\bf{x}}))_{{\bf{n}}\in \mathcal{K}}$
              can be written as follows
              \[\alpha_{\bf{x}}=\sum_{0\leq t_i\leq m_i} \varepsilon_{\bf{t}}P_{{\bf{t}}}({\bf{x}})
              +\alpha_{{\bf{x}}}^{\bf{m}}.\]
         Hence,
          \[m_{\alpha_{\bf{x}}}(\alpha_{\bf{x}})=
             \sum_{0\leq t_i\leq m_i} m_{\alpha_{\bf{x}}}(\varepsilon_{\bf{t}})P_{ {\bf{t}}}({\bf{x}})
             +m_{\alpha_{\bf{x}}}(\alpha_{\bf{x}}^{\bf{m}})
             \]
           which implies that
  \begin{equation}\label{1.equa}
           \left| m_{\alpha_{\bf{x}}}(\alpha_{\bf{x}})\right| =
           \left| m_{\alpha_{\bf{x}}}(\alpha_{\bf{x}}^{\bf{m}})\right| \leq
            M \|\alpha_{\bf{x}}^{\bf{m}} \|.\notag
  \end{equation}
            The latter    shows that if $\alpha_{\bf{x}}\in C_0(\mathcal{K})$,   then
             $\alpha_{\bf{x}}^{\bf{m}}\in C_0(\mathcal{K})$ for all $m\in \mathcal{K}$, hence
             $m_{\alpha_{\bf{x}}}(\alpha_{\bf{x}})=0$
             which is a contradiction .

\end{proof}

\begin{remark}\label{remark.2}
\emph{
\begin{enumerate}
    \item
         Observe   that in the preceding theorem neither of the assumptions
         $\pi(\{\alpha_{\bf{x}}\})=0$ nor
         $\alpha_{\bf{x}}\in C_0(\mathcal{K})$
         can be omitted.
         For example, a hypergroup of compact type is  $\alpha$-amenable in
         every character $\alpha$  while
         $1$ is the only character in  $\widehat{K}$ with the vanishing
         Plancherel measure \cite{Frankcompact, Ska92};   see also  Example (VI).
   \item   Theorem \ref{main.2}  is known for $m=d=1$  in
  \cite{f.l.s}.\\
\end{enumerate}
  }
\end{remark}

                We continue the section by examining  the $\alpha$-amenability
                of various polynomial hypergroups.   Let us first  start with
                polynomial hypergroups in
                two variables which have been extensively studied by T.
                H. Koornwinder in \cite{Koornwinder.1}.

\begin{enumerate}
  \item[(I)]
            {\bf{Koornwinder Class $V$ hypergroups:}}
            In this case $\mathcal{K}:=\{(n,k)\in \NN_0^2: n\geq k\}$ and
            the characters are given by
            \[P_{\bf{n}}(x,y):=P_{(n,k)}^{\alpha,\beta,\gamma,\eta}(x,y):=
            P_{n-k}^{(\alpha,\beta)}(x) P_{k}^{(\gamma,\eta)}(y),\hspace{.5cm}{\bf{n}}=(n,k), \]
            where $P_{n}^{(\alpha,\beta)}$ denote  the Jacobi polynomials,  $(\alpha,\beta)$,
            $(\gamma, \eta)\in V$, $P_{\bf{n}}^{\alpha,\beta,\gamma,\eta}(1,1)=1$,
              and
    \begin{align}\notag
                       V:=\{(\alpha,\beta)\in \RR^2:&\;\alpha\geq \beta>-1, (\alpha+\beta+1)
                                                      (\alpha+\beta+4)^2(\alpha+\beta+6)\\ \notag
                                                    &
                                                    \geq (\alpha-\beta)^2\cdot
                                                    (\alpha^2-2\alpha\beta+\beta^2-5\alpha-5\beta-30)\}.
    \end{align}
   The support of the Plancherel  measure
     $d\pi(x,y)=(1-x)^\alpha(1+x)^\beta(1-y)^\gamma(1+y)^\eta dx dy$  is
     $$D:=\{(x,y)| -1\leq x\leq 1, -1\leq y\leq 1\}.$$
     Since  $ | P_n^{(\alpha,\beta)}(y)|=\mathcal{O}(n^{-\alpha-\frac{1}{2}})$ as $n\rightarrow \infty $ \cite{ismail},
     we have
 \[|P_{(n,n)}^{\alpha,\beta,\gamma,\eta}(x,y)|= |P_{n}^{(\gamma,\eta)}(y)|\rightarrow 0\hspace{1.cm}(n\rightarrow \infty)\]
 when $(x,y)\in [-1,1]\times (-1,1)$ and    $\alpha$,  $\eta>-\frac{1}{2}$. So, from
  Theorem \ref{main.2} it follows that $\mathcal{K}$ is not $\alpha_{(x,y)}$-amenable.

    For $(x,y)\in \{(-1, 1), (1, -1)(-1, -1)\}$, if $\alpha>\beta$ and
   $\gamma>\eta$,   $\alpha=\beta$ and $\gamma>\eta$, or $\alpha>\beta$  and $\gamma=\eta$
   since
   \[
        P_n^{(\alpha,\beta)}(-1)=(-1)^n
        \left(
               \begin{array}{c}
                      n+\beta \\
                      n  \\
               \end{array}
        \right)
      {\big{/}}
               \left(
                  \begin{array}{c}
                     n+\alpha \\
                     n \\
                 \end{array}
        \right),
 \]
              we have $|P_{(2n,n)}^{\alpha,\beta,\gamma,\eta}(x,y)|\rightarrow 0$
              as
              $n\rightarrow \infty,$ hence $\mathcal{K}$ is not $\alpha_{(x,y)}$-amenable.

              The hypergroup $\mathcal{K}$  is, in fact,  the product of two Jacobi polynomial
              hypergroups with parameters $(\alpha,\beta)$ and $(\gamma,\eta)$ on  $\NN_0$
              \cite{zeuner.several}.
              Theorem \ref{main.2} combined with \cite{Wol84} implies
              that $\ell^1(\NN_0)$ is amenable if
              and only  if $\alpha=\beta=\gamma=\eta=-\frac{1}{2}$.
              Thus, since  $\ell^1(\mathcal{K})\cong
              \ell^1(\NN_0)\otimes_p\ell^1(\NN_0)$,
              the algebra $\ell^1(\mathcal{K})$ is amenable  and   its  maximal ideals have
              b.a.i.; see \cite{BonDun73,Johnson}.
              Consequently,   Theorem \ref{main.5} results in the
              $\alpha_{(x,y)}$-amenability of
              $\mathcal{K}$  for   $(x,y)\in D$ and  $x,y= \pm 1$.\\

\begin{remark}
\emph{
\begin{enumerate}
               \item[(i)] Let $(x, y_0)\in [-1,1]\times [-1, 1]$
                be as above fixed.  For $ \gamma>\frac{1}{2}$ one
                can show that the usual derivation of the Fourier
                transform gives a rise to  a nonzero  bounded $\alpha_{(x,y_0)}$-derivation
                on $\ell^1(\mathcal{K})$. So,  it follows from
                Remark \ref{remark.2} that  $\mathcal{K}$ is not
                $\alpha_{(x,y_0)}$-amenable and $\{\alpha_{(x,y_0)}\}$ is not a spectral set.
               \item[(ii)]
               Similar to the previous case,  one can show that   hypergroups
               of Koornwinder class III, VI, and some
               related hypergroups in two variables  which  are mentioned in
               \cite[3.1.16-20]{BloHey94} are not $\alpha_{\bf{x}}$-amenable
               if $\alpha_{\bf{x}}\not=1$.
\end{enumerate}
 }
\end{remark}

\item[(II)] {\bf{Disc Polynomial Hypergroups}:}\label{disck}
 For $\alpha'\geq 0$ the disc polynomials

     \[
        P_{m,n}^{\alpha'} (z, \bar z)
            =\left\{
    \begin{array}{ll}
       P_n^{(\alpha', m-n) }(2z\bar z-1)z^{m-n}, & \hbox{for $m\geq n$,} \\
       P_m^{(\alpha', n-m) }(2z\bar z-1)z^{n-m}, & \hbox{for $n\geq m$,}
    \end{array}
           \right.
    \]
       induce a hypergroup structure on $\mathcal{K}:=\NN_0^2$. The support
       of the Plancherel measure
       with the density
       $(z_1, z_2)\rightarrow c_{\alpha'} (1-|z_1|^2)^{\alpha'} $ is
       $\mathcal{D}:=\{ (z_1, z_2)\in \CC^2:\; z_2=\bar z_1, \; |z_1|<1\}$.
           From Theorem  \ref{main.2} and
     \begin{align}\notag
     P_{n,n}^{\alpha'}(z, \bar z)&= P_n^{(\alpha', 0) }(2z\bar z-1)= P_n^{(\alpha', 0) }(2|z|^2-1)\\\notag
                                 &=
     \mathcal{O}(n^{-\alpha'-1/2})\hspace{1.cm}(z\in
     \mathcal{D}),
     \end{align}
     as $n\rightarrow \infty$, we infer that $\mathcal{K}$ is  $\alpha_z$-amenable if
      and only
      if $\alpha_z=1$.
      Observe that $\mathcal{H}:= \{(n, n):\; n\in \NN_0\}$ is a supernormal subhypergroup of
      $\mathcal{K}$ which is
      isomorphic to the Jacobi hypergroup
      with the character set
      $\{P_{n,n}^{\alpha'}(x) \}_{n\in \NN_0}$. In
      this case we see also
      that $\mathcal{H}$ is $\alpha_x$-amenable
      if and only if $\alpha_x=1$
      despite the fact that for
      every $x\in (-1,1)$ the singleton  $\{\alpha_x\}$ is a
      spectral for $\mathcal{H}$
      if
      $\alpha' <\frac{1}{2}$; see \cite{Wol84}.
      In other words, if $\alpha' <\frac{1}{2}$
      then every
      bounded $\alpha_x$-derivation on $\ell^1(\mathcal{H})$
      is zero, however $\mathcal{H}$
      is only 1-amenable.\\
\end{enumerate}

               In the rest of the section we   deal with  the polynomial
               hypergroups in one variable,  i.e.  the system  $\{P_{\bf{n}}\}_{{\bf{n}}\in \mathcal{K}}$
               consists of polynomials of one variable and the index set $\mathcal{K}$
               is $\NN_0$. The linearization formula  in (\ref{recursion.1}) can be
               expressed in the three term recursion formula
  \begin{equation}\label{r.10}
                P_1(x)P_n(x)=a_nP_{n+1}(x)+b_nP_n(x)+ c_nP_{n-1}(x),
  \end{equation}
               for $n\in \NN$   and $P_0(x)=1$, and we take $P_n(1)=1$, $P_1(x)=\frac{1}{a_0}(x-b_0)$
               with $a_n>0$,  $b_n\in \RR$, and  $c_{n+1}>0$ for all $n\in
               \NN_0$. The existence of the orthogonality measure  is due
               to Favard's theorem \cite{ismail} and applying it to the
               relation (\ref{r.10}) results in  $a_n+b_n+c_n=1$
               and $a_0+b_0=1$. The identity map defines
               an involution to these hypergroups and  their Haar weights are given
               by $h(0)=1$ and $h(n)=\left(\int_\RR P_n^2(x)d\pi(x)\right)^{-\frac{1}{2}}$
               $(n\geq 1)$ \cite[Theorem.1.3.26]{BloHey94}. We  consider the $\alpha$-amenability
               of following polynomial hypergroups.

\begin{enumerate}
    \item[(III)]
       {\bf{Associated Legendre hypergroups}}:
       For $\nu\in \RR_0$, let
       $\gamma_n:=\frac{(\nu+1)_n}{2^n(\nu+\frac{1}{2})_n}\left(1+\sum_{k=1}^n \frac{\nu}{k+\nu}\right)$,
       $a_n:=\frac{\gamma_{n+1}}{\gamma_{n}}$,
       $b_n:=0$, and $c_n:=1-a_n$ if  $n\geq1$ and  $\gamma_0=1$.
       The  polynomial $P_n$ associated to the sequences
       $(a_n)_{n\geq 1},(b_n)_{n\geq 1},(c_n)_{n\geq 1}$ in the
       recursion formula (\ref{r.10})
       is the $n$-th associated Legendre polynomial with
       parameter $\nu$.
       The Haar weights of the induced hypergroup on $\NN_0$ are given
       by $h(0)=1$ and
       $h(n)=\frac{2\nu+2n+1}{2\nu+1}\left( 1+ \sum_{k=1}^n\frac{\nu}{(k+\nu)^2}\right)^2 $,  $n\geq 1$,
       and the support of the Plancherel measure can be
       identified with  $[-1,1]$;
       see \cite{BloHey94}.
       If $x\in (-1,1)$,
       $\pi(\{\alpha_x\})=0$ and $\alpha_x\in C_0(\NN_0)$, so  it follows from
       Theorem \ref{main.2}
       that $\NN_0$ is $\alpha_x$-amenable if and only if $\alpha_x=1$.

    \item[(IV)] {\bf{Pollaczek polynomials hypergroup}}: The Pollaczek polynomials
        $\{P^{(\eta, \mu)}_n\}_{n\in \NN_0}$ depending
        on the parameters  $\eta\geq 0$,  $\mu>0$ or $-\frac{1}{2} <\eta<0$ and
        $0\leq \mu <\eta+\frac{1}{2}$ induce a hypergroup structure on $\NN_0$ \cite{lasser.lagure}.
        The Haar weights are given  by $h(0)=1$ and
        \[h(n)=\frac{(2n+2\eta +2\mu +1)(2\eta+1)_n}{(2\eta +2\nu +1)n!}\left( \sum_{k=0}^n
          \left(
     \begin{array}{c}
        n  \\
        k \\
     \end{array}
         \right)
      \frac{(2\mu)^k}{(2\eta+1)_k}
      \right)^2,\]
          and the Plancherel measure with the support
          $\mathcal{S}\cong[-1,1]$  is given by $d\pi(x)=A(x)dx$ where
          $A(\cos t)=(\sin t)^{2\eta}|\Gamma(\eta+\frac{1}{2}+i\mu \cot(t))|^2
          \exp ((2t-\pi)\mu \cot(t))$,  $0\leq t\leq \pi.$
      Given  $x\in (-1,1)$, since $\pi(\{\alpha_x\})=0$ and    $\alpha_x\in C_0(\NN_0)$,
      by Theorem \ref{main.2} we see  that $\NN_0$ is $\alpha_x$-amenable if and only if $\alpha_x=1$.

 \item[(V)]
         {\bf{Generalized  Soradi hypergroups}}: These are polynomial hypergroups
        of type [V] on $\NN_0$ \cite{BloHey94} with the characters
         \begin{align}\notag
             P_n(\cos \theta)= \frac{\sin (n+1)\theta- k\sin
             n\theta}{(nk+n+1)\sin \theta} \hspace{.5cm}(n\geq 1),
        \end{align}
         and  the density of the Plancherel measure on the dual space
         $\widehat\NN_0\cong [-1,1]$ is given by
         $p(x):= \frac{2(1-x^2)^{1/2}}{\pi(1+k^2-2kx)} (k>1).$
         For $x\in[-1,  1)$, since $\pi(\{\alpha_x\})=0$ and
         $\alpha_x\in C_0(\NN_0)$,
         Theorem \ref{main.2} implies that
         $\NN_0$ is $\alpha_x$-amenable if and only if $\alpha_x=1$.

\item[(VI)] {\bf{Hypergroups associated with infinite distance-transitive graphs}}:
               They are   polynomial  hypergroups on $\NN_0$ depending on $a, b\in\RR$ with $a, b\geq
               2$; and, one can  associate them  with infinite distance-transitive graphs if  $a, b$ are integers.
               These hypergroups have been thoroughly studied by M.
               Voit \cite{Voit.transitive}.
               For   $b>a\geq 2$  (see below) they provide a rare and interesting case of
               $\alpha$-amenability of hypergroups. Their  Haar weights and characters are given by
  \begin{align}\notag
                  h^{(a,b)}(0):=1, \;  h^{(a,b)}(n)=
                  a (a-1)^{n-1} (b-1)^n\quad (n\geq 1),
  \end{align}
    and
      \begin{align}\notag
            P_n^{(a,b)}(x) =\frac{a-1}{a\left( (a-1)(b-1)\right)^{n/2}} \left(
            U_n(x)+\frac{b-2}{\left( (a-1)(b-1)\right)^{1/2}}
            U_{n-1}(x)-\frac{1}{a-1}U_{n-2}(x) \right),
      \end{align}
          respectively, where $U_n(\cos t)=\frac{\sin (n+1)t}{\sin t}$ are the
          Tchebychev  polynomials of the second kind and
          $u_{-1}=u_{-2}:=0$. The dual space $\widehat{\NN_0^{(a,b)}}$
          can be identified with $[-s_1,s_1]$, where $s_1:= \frac{ab-a-b+1}{2\sqrt{(a-1)(b-1)}}$.
          The normalized orthogonality measure $\pi\in M^1(\RR)$
          is
\begin{align}\notag
        d\pi(x)=A(x)dx|_{[-1,1]} \quad\quad\hspace{1.4cm} \text{for}\;
        a\geq b\geq 2,
\end{align}
 and
 \begin{align}
           d\pi(x)=A(x)dx|_{[-1,1]} +
           \frac{b-a}{b}ds_0 \quad \text{for}\; b>a\geq 2\notag
 \end{align}
          with $A(x):= \frac{a}{2\pi}\frac{(1-x^2)^{1/2}}{(s_1-x)(x-s_0)}, \; s_0=\frac{2-a-b}{2\sqrt{(a-1)(b-1)}}$.
 Note that
                 \[P_n^{(a,b)} (s_1)=1\hspace{.5cm} \mbox{ and } \hspace{.5cm} P_n^{(a,b)}(s_0)=
                 (1-b)^{-n} \hspace{.5cm} \mbox{ for } n\geq 0.\]

\begin{proposition}\label{last.pro.}
\emph{Let $\NN_0^{(a,b)}$ denote the above  hypergroup. Then }
\begin{itemize}
\item[\emph{(i)}] \emph{for $a\geq b\geq 2$,  $\NN_0^{(a,b)}$is $\alpha_x$-amenable
if and only if \; $x=s_1$. }
\item[\emph{(ii)}] \emph{for $b>a\geq 2$,  $\NN_0^{(a,b)}$is $\alpha_x$-amenable if and only if\;
  $x=s_1$ or $x=s_0$. }
\end{itemize}
\end{proposition}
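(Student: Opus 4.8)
The plan is to apply Theorem \ref{main.2} together with the explicit character and Haar-weight formulas, splitting the two cases according to whether the point $x$ lies in the support of the Plancherel measure with positive mass, lies in the interior of $[-1,1]$, or equals one of the distinguished points $s_0, s_1$. First I would treat the ``easy'' direction: a hypergroup is always $1$-amenable, and $\alpha_{s_1}=1$ since $P_n^{(a,b)}(s_1)=1$ for all $n$, so $\NN_0^{(a,b)}$ is $\alpha_{s_1}$-amenable in both cases. For part (ii), when $b>a\geq 2$ the point $s_0$ carries positive Plancherel mass $\pi(\{\alpha_{s_0}\})=\frac{b-a}{b}>0$; then by Remark \ref{cor.1}(i) (or directly, since $\alpha_{s_0}(n)=P_n^{(a,b)}(s_0)=(1-b)^{-n}$ satisfies $\sum_n |(1-b)^{-n}|^2 h^{(a,b)}(n)<\infty$ because $h^{(a,b)}(n)$ grows like $((a-1)(b-1))^n$ while $|(1-b)^{-n}|^2=(b-1)^{-2n}$, and $(a-1)(b-1)/(b-1)^2=(a-1)/(b-1)<1$), the character $\alpha_{s_0}$ lies in $\ell^1(\NN_0)\cap\ell^2(\NN_0)$, so $m_{\alpha_{s_0}}$ defined by integration against $\overline{\alpha_{s_0}}$ is an $\alpha_{s_0}$-mean and $\NN_0^{(a,b)}$ is $\alpha_{s_0}$-amenable.

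For the ``hard'' direction I would show that no other character gives amenability, and here the workhorse is Theorem \ref{main.2}: it suffices to check that for $x\neq s_1$ (and in case (ii) also $x\neq s_0$) the character $\alpha_x$ satisfies $\pi(\{\alpha_x\})=0$ and $\alpha_x\in C_0(\NN_0)$, i.e. $P_n^{(a,b)}(x)\to 0$ as $n\to\infty$. The Plancherel measure is absolutely continuous on $[-1,1]$ (plus the atom at $s_0$ in case (ii)), so $\pi(\{\alpha_x\})=0$ for every $x\neq s_0$. The decay claim is the crux: writing $x=\cos t$ with $U_n(\cos t)=\sin((n+1)t)/\sin t$ bounded for $t\in(0,\pi)$, the formula for $P_n^{(a,b)}(x)$ exhibits it as a fixed linear combination of $U_n, U_{n-1}, U_{n-2}$ divided by $((a-1)(b-1))^{n/2}$; since $(a-1)(b-1)\geq 1$ with equality only when $a=b=2$, and since $U_n$ is bounded, we get $|P_n^{(a,b)}(x)|=O(((a-1)(b-1))^{-n/2})\to 0$ whenever $(a-1)(b-1)>1$, and when $a=b=2$ one checks directly (the polynomials reduce to a Tchebychev-type expression) that $P_n^{(2,2)}(x)\to 0$ for $x\in(-1,1)$ as well; the endpoints $x=\pm 1\in[-1,1]$ that are not equal to $s_1$ need the explicit boundary values to confirm decay. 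Thus $\alpha_x\in C_0(\NN_0)$ for all $x$ in the relevant ranges, Theorem \ref{main.2} applies, and $\NN_0^{(a,b)}$ is not $\alpha_x$-amenable.

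I expect the main obstacle to be a clean, uniform justification of the decay $P_n^{(a,b)}(x)\to 0$ for \emph{all} $x$ in the dual space except the distinguished ones, particularly (a) handling the boundary points of $[-1,1]$ (where $U_n(\cos t)/\sin t$ is not obviously bounded, so one must use the explicit values of $P_n^{(a,b)}$ at $\pm 1$ or a limiting argument) and (b) the borderline case $a=b=2$ where the geometric factor $((a-1)(b-1))^{-n/2}$ does not decay and one needs the oscillatory cancellation of the Tchebychev combination instead; in that case the coefficient of $U_{n-1}$ vanishes (since $b-2=0$) and the combination $U_n(x)-U_{n-2}(x)$ can be simplified, giving decay on $(-1,1)$ but requiring separate care at the endpoints. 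A secondary point worth stating carefully is that in case (ii) one must confirm $s_0\in[-1,1]$ (equivalently $s_0\geq -1$, which follows from $a,b\geq 2$) so that the atom genuinely sits inside the continuous part's support and the dual space is exactly $[-s_1,s_1]$ as claimed; and one should remark that $s_1>1$ unless $a=b=2$, so $s_1$ is the lone point outside $[-1,1]$, matching the fact that $\alpha_{s_1}=1$ is not in $C_0(\NN_0)$ and is the unique ``amenable'' character apart from $s_0$.
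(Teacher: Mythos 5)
Your strategy coincides with the paper's: Theorem \ref{main.2} for the negative direction (absence of a Plancherel atom plus $\alpha_x\in C_0$), and the mass point at $s_0$ together with Remark \ref{cor.1} for the positive case in (ii). Your decay estimate $|P_n^{(a,b)}(x)|=O\bigl(((a-1)(b-1))^{-n/2}\bigr)$ for fixed $x\in(-1,1)$ is a correct and welcome elaboration of what the paper merely asserts. However, your description of the dual space is wrong, and this leaves characters untreated. One has $s_0=\frac{2-a-b}{2\sqrt{(a-1)(b-1)}}\leq -1$, with equality only when $a=b$; so in case (ii) the atom sits \emph{outside} $[-1,1]$, not inside it, and $\widehat{\NN_0^{(a,b)}}=[-s_1,s_1]$ strictly contains $[-1,1]$, so $s_1$ is not ``the lone point outside $[-1,1]$''. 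For $1<|x|<s_1$ the assertion ``$U_n(x)$ is bounded'' fails: there $U_n(x)$ grows like $(|x|+\sqrt{x^2-1})^{\,n}$, and one must check that this rate is beaten by $((a-1)(b-1))^{n/2}$ --- which is exactly the condition $|x|<s_1$ --- before concluding $\alpha_x\in C_0$ and invoking Theorem \ref{main.2}. This is fixable but is a genuine missing step; at $x=-s_1$ the two rates coincide, $\alpha_{-s_1}$ need not lie in $C_0$, and neither your argument nor the paper's (which treats only the open interval $(-s_1,s_1)$) covers that point.

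Two further points are not merely missing but incorrect. First, for $a=b=2$ your proposed rescue fails: since $b-2=0$ one gets $P_n^{(2,2)}=\frac{1}{2}(U_n-U_{n-2})=T_n$, the Tchebychev polynomial of the first kind, which does \emph{not} tend to $0$ on $(-1,1)$; there is no cancellation-induced decay. Indeed $\NN_0^{(2,2)}$ is the Jacobi $(-\frac{1}{2},-\frac{1}{2})$ hypergroup, whose $\ell^1$-algebra is amenable by the paper's own Example (I), hence it is $\alpha_x$-amenable for \emph{every} $x$ --- so part (i) of the statement itself fails at $a=b=2$, a degenerate case the paper's proof also silently skips. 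Second, your verification for $s_0$ establishes only $\alpha_{s_0}\in\ell^2$: indeed $\sum_n|\alpha_{s_0}(n)|^2h^{(a,b)}(n)=1+\frac{a}{a-1}\sum_{n\geq1}\bigl(\frac{a-1}{b-1}\bigr)^n<\infty$ for $b>a$, but $\sum_n|\alpha_{s_0}(n)|\,h^{(a,b)}(n)=1+\sum_{n\geq1}a(a-1)^{n-1}=\infty$, so $\alpha_{s_0}\notin\ell^1$ and the mean of Remark \ref{cor.1}(i), $f\mapsto\|\alpha\|_2^{-2}\int f\bar\alpha\,dm$, is not defined on all of $\ell^\infty$. (The paper's ``obviously $\alpha_{s_0}\in\ell^1$'' is equally unjustified; the $\alpha_{s_0}$-amenability has to be extracted from $\pi(\{\alpha_{s_0}\})=\frac{b-a}{b}>0$ by some other route, e.g.\ the b.a.i.\ criterion of Theorem \ref{main.5}.)
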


\begin{proof}
(i) If   $ x\in (-s_1, s_1)$, then    $\pi(\{\alpha_x\})=0$ and
$\alpha_x\in C_0(\NN_0^{(a, b)})$. So,   applying Theorem
\ref{main.2} yields  that   $\NN_0^{(a, b)}$ is
 $\alpha_x$- amenable if and only if $x=s_0$, as $\alpha_{s_0}=1$. \\
(ii) As in part (i), we  can show that if $x\not=s_0$,  then
$\NN_0^{(a, b)}$ is $\alpha_x$- amenable if and only if $x=s_1$. In
the case of $x=s_0$,  obviously  $\alpha_{s_0}\in \ell^1(\NN_0^{(a,
b)})$
  (see also \cite[Remark 1.1]{Voit.transitive}) which implies,   by  Remark \ref{cor.1}
  (ii),  that
 $\NN_0^{(a, b)}$ is $\alpha_{s_0}$-amenable.
\end{proof}

\begin{remark}\label{last.remark}
\emph{Notice that in the previous example $\widehat{K}$  contains
      two positive characters $\alpha_{s_0}$ and $\alpha_{s_1}$ with  diverse behaviours. Indeed, Part (i) shows that $ \NN_0^{(a,b)}$
      is $\alpha_{s_1}$-amenable but not $\alpha_{s_0}$-amenable if $a\geq b \geq 2$,
      whereas  Part (ii) shows  that
      $ \NN_0^{(a,b)}$ is  $\alpha_{s_1}$ and  $\alpha_{s_0}$-amenable for $b>a\geq 2$.
      In latter case,  the functional $m_{\alpha_{s_0}}$, given
      in Remark \ref{cor.1} (ii), is a unique $\alpha_{s_0}$-mean on
      $\ell^\infty(\NN_0^{(a, b)})$ while
      the cardinality of $\alpha_{s_1}$-means on $\ell^\infty(\NN_0^{(a, b)})$
      is infinity; see \cite{azimi.C.Math.Rep., Ska92}. }
\end{remark}

\end{enumerate}


\begin{thebibliography}{10}


\bibitem{thesis}
A. Azimifard,
\newblock { \em \emph{ $\alpha$-Amenability of Banach algebras on commutative
  hypergroups}}.
\newblock Dissertation, Technische Universität München, 2006.
\vspace{-2mm}


\bibitem{azimifard.Monath}
A. Azimifard,
\newblock{ On  the $\alpha$-Amenability of Hypergroups.}  Montash. Math.
(to appear 2008)
\vspace{-2mm}

\bibitem{azimi.C.Math.Rep.}
A. Azimifard,
\newblock{ Hypergroups with the unique $\alpha$-means. }  C. R. Math. Acad. Sci. Soc. R. Can. (to appear
2008)



\vspace{-2mm}
\bibitem{BloHey94}
W. R. Bloom  and H.  Heyer,
\newblock { \em \emph{Harmonic Analysis of Probability Measures on  Hypergroups. }}
\newblock De Gruyter, 1994.

\vspace{-2mm}


\bibitem{BonDun73}
F. F. Bonsall  and J.  Duncan,
\newblock { \em \emph{Complete Normed Algebras. }}
\newblock Springer-Verlag, New York-Heidelberg, 1973.


\vspace{-2mm}




\bibitem{Dunford}
N. Dunford  and J. T.  Schwartz,
\newblock {\em \emph{ Linear Operators I.}}
\newblock Wiley \& Sons, 1988.

\vspace{-2mm}

\bibitem{f.l.s}
F.  Filbir, R. Lasser and   R. Szwarc,
\newblock Reiter's condition {$P_1$} and approximate identities for
  hypergroups.
\newblock {\em \emph{Monat. Math. }} 143 (2004) 189--203.

\vspace{-2mm}


\bibitem{Frankcompact}
F.  Filbir, R. Lasser and   R. Szwarc,
\newblock Hypergroups of compact type.
\newblock {\em \emph{J. Comp. and App. Math.}} 178 (1) (2005) 205--214.

\vspace{-2mm}

\bibitem{ismail}
M. Ismail,
\newblock {\em \emph{ Classical and quantum orthogonal  polynomials in one variables.}}
\newblock Cambridge University  Press, 2005.

\vspace{-2mm}
\bibitem{Jew75}
R. I.  Jewett,
\newblock Spaces with an abstract convolution of measures.
\newblock {\em \emph{Adv. in Math.}}  18 (1975) 1--101.
\vspace{-2mm}

 \bibitem{Johnson}
B. E. Johnson,
\newblock Cohomology in {Banach} algebras.
\newblock {Mem. Amer. Math. Soc. 127 (1972).}
\vspace{-2mm}

\bibitem{Koornwinder.1}
T. H. Koornwinder,
\newblock  Two-variable analogues of the classical orthogonal polynomials. In:
 Theory And Application Of Special Functions, (R. Askey, ed.), 
pp. 435 -- 495, Academic Press, New York, 1975. 
             
\vspace{-2mm}


\bibitem{lasser.lagure}
R.  Lasser,
\newblock Orthogonal polynomials and hypergroups II-the symmetric
case.
\newblock {\em \emph{Trans. Amer. Math. Soc.}} 341 (1994) 749--770.
\vspace{-2mm}


\vspace{-2mm}
\bibitem{Nev79}
 P. G. Nevai,
\newblock {\em \emph{ Orthogonal Polynomials}}.
\newblock Mem.  Amer. Math. Soc. 213, 1979.


\vspace{-2mm}
\bibitem{Ska92}
M. Skantharajah,
\newblock Amenable hypergroups.
\newblock {\em \emph{Illinois J. Math.}} 36 (1) (1992) 15--46.
\vspace{-2mm}

\bibitem{Spec75}
R. Spector,
\newblock Aper\c{c}u de la th\'{e}orie des hypergroupes.
\newblock In {\em \emph{Anal. harmon. Groupes de Lie}}, {\em
  \emph{LNM  497 }} (1975) 643--673.

\vspace{-2mm}
  \bibitem{voit.p.c.}
M. Voit,
\newblock Positive characters on commutative hypergroups and some
applications.
\newblock {\em \emph{Math. Z.}} 198 (1988) 405--421.
\vspace{-2mm}

\bibitem{Voit91}
M. Voit,
\newblock On the dual space of a commutative hypergroup.
\newblock {\em \emph{Arch. Math.}} 56 (4) (1991) 380--385.
\vspace{-2mm}


    
\bibitem{voit.sy}
M. Voit,
\newblock Factorization of probability measures on symmetric hypergroups.
\newblock  J. Aust. Math. Soc.   (Series A)  50   (1991) 417 -- 467.
\vspace{-2mm}



\bibitem{Voit.transitive}
M. Voit,
\newblock  A product formula for orthogonal polynomials
associated with infinite distance-transitive graphs.
\newblock {\em \emph{J. App. Theory. }}120 (2003) 337--354.
\vspace{-2mm}

\bibitem{vogel.spectral}
M. Vogel,
\newblock Spectral synthesis on algebras of orthogonal polynomial
series.
\newblock {\em \emph{Math. Z.}} 194 (1) (1987) 99--116.

\vspace{-2mm}
\bibitem{vrem.joint}
R. C.  Vrem,
\newblock Hypergroups joins and their dual objects.
\newblock {\em \emph{Pacific J. Math.}} 111 (1984) 483-- 495.
\vspace{-2mm}

\bibitem{Wol84}
 S. Wolfenstetter,
\newblock {\em \emph{{J}acobi-{P}olynome und {B}essel-{F}unktionen unter dem
  {G}esichtspunkt der harmonischen {A}nalyse}}.
\newblock Dissertation , Technische Universit\"at M\"unchen,  1984.

\vspace{-2mm}
\bibitem{zeuner.several}
H. Zeuner,
\newblock Polynomial hypergroups in several variables.
\newblock {\em \emph{Arch. Math.}} 58  (1992) 425--434.


\end{thebibliography}
 \end{document}